\newcommand{\cay}{{\rm Cay}}
\DeclarePairedDelimiter\abs{\lvert}{\rvert}		
\DeclarePairedDelimiter\paren{(}{)}			
\DeclarePairedDelimiter\braces{\{}{\}}			
\DeclarePairedDelimiter\floor{\lfloor}{\rfloor}
	\newcommand{\Bigparen}[1]{\paren[\Big]{#1}}
\DeclareMathOperator{\id}{id}				
\theoremstyle{plain}
\newtheorem{thm}{Theorem}[section]		\newtheorem{theorem}[thm]{Theorem}
			\newtheorem{lemma}[thm]{Lemma}
		\newtheorem{corollary}[thm]{Corollary}
			\newtheorem{question}[thm]{Question}
		\newtheorem{problem}[thm]{Problem}
\newtheorem*{thm*}{Theorem}			\newtheorem*{theorem*}{Theorem}
\newtheorem*{prop*}{Proposition}		\newtheorem*{proposition*}{Proposition}
\newtheorem*{lem*}{Lemma}			\newtheorem*{lemma*}{Lemma}
\newtheorem*{cor*}{Corollary}			\newtheorem*{corollary*}{Corollary}
\newtheorem*{qu*}{Question}			\newtheorem*{question*}{Question}
\newtheorem*{conj*}{Conjecture}			\newtheorem*{conjecture*}{Question}
\newtheorem*{prob*}{Problem}		\newtheorem*{problem*}{Problem}
\newtheorem*{fact*}{Fact}
\newtheorem*{claim*}{Claim}
\newtheorem*{case*}{Case}
\numberwithin{equation}{section}
\newtheorem{alphthm}{Theorem}			
\theoremstyle{definition}
\newtheorem*{de*}{Definition}			\newtheorem{definition*}{Definition}
\newtheorem*{notation*}{Notation}
\newtheorem*{conv*}{Convention}			\newtheorem*{convention*}{Convention}
\theoremstyle{remark}
			\newtheorem{remark}[thm]{Remark}
			\newtheorem{example}[thm]{Example}
\DeclareMathAlphabet{\mathbit}{OT1}{cmr}{bx}{it}
\crefname{thm}{Theorem}{Theorems}              \crefname{theorem}{Theorem}{Theorems}
\crefname{prop}{Proposition}{Propositions}     \crefname{proposition}{Proposition}{Propositions}
\crefname{lem}{Lemma}{Lemmas}                  \crefname{lemma}{Lemma}{Lemmas}
\crefname{rmk}{Remark}{Remarks}                \crefname{remark}{Remark}{Remarks}
\crefname{cor}{Corollary}{Corollaries}         \crefname{corollary}{Corollary}{Corollaries}
\crefname{qu}{Question}{Questions}             \crefname{question}{Question}{Questions}
\crefname{conj}{Conjecture}{Conjectures}       \crefname{conjecture}{Conjecture}{Conjectures}
\crefname{prob}{Problem}{Problems}       \crefname{problem}{Problem}{Problems}
\crefname{fact}{Fact}{Facts}
\crefname{claim}{Claim}{Claims}
\crefname{case}{Case}{Cases}
\crefname{alphthm}{Theorem}{Theorems}          \crefname{alphcor}{Corollary}{Corollaries}
\crefname{alphprop}{Proposition}{Propositions}
\newcommand{\Cat}[1]{\ifmmode \text{\normalfont \textbf{#1}} \else {\normalfont \textbf{#1}}\fi}
\newcommand{\mhyphen}{\textnormal{-}}
\newcommand{\variable}{\,\mhyphen\,}
\definecolor{darkgreen}{rgb}{0.0, 0.5, 0.0}
\newcounter{fcomment}
\definecolor{brown}{rgb}{0.55, 0.3, 0.25}
\newcounter{xcomment}
\definecolor{newpurple}{rgb}{0.8, 0, 0.9}
\mathchardef { __egreg_#1: } = \mathcode`#1 \scan_stop:
\NewDocumentCommand{\crse}{m}
 {\group_begin:
  \egreg_embolden:
  #1
  \group_end:}
\newcommand*{\oldcrse}[1]{\bm{#1}}
\newcommand{\cop}{\mathbin{\raisebox{-0.1 ex}{\scalebox{1.2}{$\boldsymbol{\ast}$}}}}
\newcommand*{\cinversefn}{[\mhyphen]^{\oldcrse{-1}}}
\def\varcrs{\@ifnextchar[{\@withvarcrs}{\@withoutvarcrs}}
\def\@withvarcrs[#1]#2{\mathcal{E}_{\rm #2}^{\rm #1}}
\def\@withoutvarcrs#1{\mathcal{E}_{\rm #1}}
\def\varFcrs{\@ifnextchar[{\@withvarFcrs}{\@withoutvarFcrs}}
\def\@withvarFcrs[#1]#2{\mathcal{F}_{\rm #2}^{\rm #1}}
\def\@withoutvarFcrs#1{\mathcal{F}_{\rm #1}}
\title{On a coarse invertibility spectrum for coarse groups}
\author{Leo Schäfer and Federico Vigolo}
\newcommand*{\minvariant}[2][\mathcal P]{{\mathfrak S_{#1}\paren{#2}}}
\begin{document}
\maketitle

\begin{abstract}
  We introduce a coarse algebraic invariant for coarse groups and use it to differentiate various coarsifications of the group of integers. This lets us answer two questions posed by Leitner and the second author. The invariant is obtained by considering the set of exponents $n$ such that taking $n$-th powers defines a coarse equivalence of the coarse group.
\end{abstract}

\section{Introduction}
\subsection{A number-theoretic problem}\label{ssec:intro: nr theory}
One interesting problem is to understand the large scale geometry of the metric spaces obtained by equipping the set of integers with the word metric associated with some geometric series. Namely, given $g \in \mathbb N$ with $g \geq 1$ let 
\(
  S_g = \{\pm g^n : n \in \mathbb N\}.
\)
The \emph{word length} $\ell_g(n)$ of an integer $n\in\mathbb Z$ is the minimal number of elements in $S_g$ adding up to $n$ (with multiplicity), and the \emph{word metric} $d_{g}(n,m)\coloneqq \ell_g(n-m)$ is the induced addition-invariant distance.

For $g=1$, $d_g$ is just the usual Euclidean metric, while for $g\geq 2$ the distance $d_{g}(n,m)$ can be seen as a ``measure of difference'' between the $g$-adic series representations of $n$ and $m$.

Observe that any $g \geq 2$ gives rise to a non-proper metric. This shows that there is a very clear qualitative difference between $d_1$ and all the other metrics $d_g$. Understanding the relation between $d_{g_1}$ and $d_{g_2}$ for two different values $g_1,g_2\geq 2$ is a more delicate matter: understanding whether they are bi-Lipschitz equivalent already requires some work, and goes back to a theorem by M.~Nathanson.

\begin{theorem}[\cite{nathanson2011bi}]\label{thm: nathanson bilip equivalence}
  Given $g_1,g_2\geq 2$ the identity map $(\mathbb Z,d_{g_1})\to (\mathbb Z,d_{g_2})$ is a bi-Lipschitz equivalence if and only if $g_1^n=g_2^m$ for some $n,m\in\mathbb N$.
\end{theorem}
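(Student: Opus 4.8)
The plan is to reformulate bi-Lipschitz equivalence of the identity in terms of the two word-length functions and then treat the two implications separately. Since both metrics are addition-invariant, $d_{g_i}(n,m) = \ell_{g_i}(n-m)$, the identity $(\mathbb Z, d_{g_1}) \to (\mathbb Z, d_{g_2})$ is bi-Lipschitz if and only if there is a constant $L \ge 1$ with $L^{-1}\ell_{g_1}(x) \le \ell_{g_2}(x) \le L\,\ell_{g_1}(x)$ for every $x \in \mathbb Z$. Two elementary properties of word length are used throughout: it is \emph{subadditive}, $\ell_g(x+y) \le \ell_g(x) + \ell_g(y)$ (concatenate representations), and it is \emph{invariant under multiplication by powers of the base}, $\ell_g(g^t x) = \ell_g(x)$, since multiplying a signed sum $\sum_i \epsilon_i g^{a_i}$ by $g^t$ merely shifts the exponents and preserves the number of summands.

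For the implication $(\Leftarrow)$, suppose $g_1^a = g_2^b =: h$ with $a,b \ge 1$. I would first bound $\ell_{g_1}(g_2^j)$ uniformly in $j$: writing $j = qb + r$ with $0 \le r < b$ gives $g_2^j = h^q g_2^r = g_1^{aq} g_2^r$, so by shift-invariance $\ell_{g_1}(g_2^j) = \ell_{g_1}(g_2^r) \le \max_{0 \le r < b}\ell_{g_1}(g_2^r) =: C$, a finite constant. For arbitrary $x$, expanding an optimal $g_2$-representation $x = \sum_{t=1}^{\ell_{g_2}(x)} \pm g_2^{j_t}$ and applying subadditivity yields $\ell_{g_1}(x) \le \sum_t \ell_{g_1}(g_2^{j_t}) \le C\,\ell_{g_2}(x)$. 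The symmetric estimate (swapping the roles of $g_1$ and $g_2$, using that $h$ is also a power of $g_2$) gives the reverse inequality, so the identity is bi-Lipschitz.

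For the implication $(\Rightarrow)$ I would argue by contraposition and show that if $g_1,g_2$ admit no common power then the identity is not even Lipschitz. Testing the Lipschitz bound on the elements $g_1^k$, for which $\ell_{g_1}(g_1^k) = 1$, reduces the entire problem to a single number-theoretic statement: \emph{if $g_1$ and $g_2$ are multiplicatively independent, then $\ell_{g_2}(g_1^k) \to \infty$}. Unwinding the definition, $\ell_{g_2}(g_1^k) \le L$ means that $g_1^k$ is a signed sum of at most $L$ powers of $g_2$, i.e. that $g_1^k$ has boundedly many nonzero signed base-$g_2$ digits; equivalently, dividing by $g_1^k$, that $1$ is a sum of at most $L$ of the $S$-units $\pm g_2^{a_i} g_1^{-k}$, where $S$ is the set of primes dividing $g_1 g_2$.

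This last step is the crux and the main obstacle. A purely elementary pigeonhole or equidistribution argument does not suffice, because $g_1^k$ can be multiplicatively extremely close to a single power of $g_2$ (so the ``gaps'' between the exponents $a_i$ may grow), and ruling this out amounts to solving an exponential Diophantine equation with a bounded number of terms. I would control it with quantitative tools from transcendence theory: either Baker's lower bounds for linear forms in logarithms (in the spirit of Stewart's theorem that the number of nonzero base-$g_2$ digits of $g_1^k$ tends to infinity) or the finiteness theorems for $S$-unit equations with boundedly many summands, concluding that only multiplicatively dependent pairs can keep $\ell_{g_2}(g_1^k)$ bounded, which is precisely the desired contrapositive.
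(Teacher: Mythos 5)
Your proposal cannot be checked against a proof in the paper, because the paper does not prove this theorem: it is imported from Nathanson \cite{nathanson2011bi}, and the only related machinery the paper develops (the special $g$-adic representations of \cref{thm:nat-representation} and the non-properness argument of \cref{lem:Z Sq not invertible}) is used for the weaker coarse statements. On its own terms, your $(\Leftarrow)$ direction is correct and complete: translation invariance reduces bi-Lipschitzness of the identity to the two-sided comparison of length functions, the division $j=qb+r$ gives the uniform bound $\ell_{g_1}(g_2^j)\leq C$, and subadditivity finishes. One micro-correction: your ``shift-invariance'' $\ell_g(g^t x)=\ell_g(x)$ is only immediate as the inequality $\ell_g(g^t x)\leq \ell_g(x)$, since dividing a representation of $g^t x$ by $g^t$ may create negative exponents; equality requires the uniqueness statement of \cref{thm:nat-representation}. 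Your argument only uses the inequality, so this is harmless. (Also, under the paper's convention that $0\in\mathbb N$, the criterion as literally stated is vacuous at $n=m=0$; positive exponents are intended, as you implicitly assume.)

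The genuine gap is in the $(\Rightarrow)$ direction, and while you flag its location honestly, one of the two tools you propose does not fit and the other is invoked without the steps that carry the difficulty. Citing Stewart's theorem verbatim would fail: it lower-bounds the number of nonzero digits in the \emph{standard} base-$g_2$ expansion, which does not control the signed word length --- for instance $g_2^n-1$ has $n$ nonzero base-$g_2$ digits but $\ell_{g_2}(g_2^n-1)=2$. What you need is a statement about signed sums of boundedly many powers, and there the $S$-unit route does work but requires the bookkeeping you omit: writing $g_1^k=\epsilon_1 g_2^{a_1}+\cdots+\epsilon_r g_2^{a_r}$ with $\sum_j\abs{\epsilon_j}\leq L$, dividing by $g_1^k$, applying finiteness of nondegenerate solutions of the $S$-unit equation (with $S$ the primes dividing $g_1g_2$ and coefficients from a finite set), performing an induction over vanishing subsums (a vanishing subsum containing the term $-g_1^k$ restarts the argument with fewer terms; vanishing subsums among the $g_2$-powers can be deleted), and finally a pigeonhole extracting two indices $k<k'$ with $g_2^{a}g_1^{-k}=g_2^{a'}g_1^{-k'}$, whence $g_1^{k'-k}=g_2^{a'-a}$ and multiplicative dependence. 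The vanishing-subsum analysis is precisely where the hard dichotomy (bounded gaps versus $g_1^k$ exponentially close to $A g_2^b$) hides, and no elementary substitute is available: counting arguments fail because both metrics have polylogarithmic ball growth, and congruence or valuation arguments only see the prime supports of $g_1$ and $g_2$, which cannot separate multiplicatively independent pairs with equal prime support such as $g_1=12$, $g_2=18$ --- this is exactly why the paper's own elementary method in \cref{thm:invariant for Z Sq} distinguishes only different prime sets. As written, your second half is a correct reduction plus an announcement of the theorem's hard content, not a proof of it.
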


Notice however, \cref{thm: nathanson bilip equivalence} does not rule out the possibility that there exist some bi-Lipschitz function $f\colon(\mathbb Z, d_{g_1})\to(\mathbb Z,d_{g_2})$ that is not the identity map. Namely, it does not answer the following.

\begin{question}\label{qu:intro:metrics}
  Is it true that $(\mathbb Z, d_{g_1})$ and $(\mathbb Z,d_{g_2})$ are bi-Lipschitz equivalent metric spaces if and only if $g_1^n= g_2^m$ for some $n,m\in\mathbb N$?
\end{question}

\cref{qu:intro:metrics} is a difficult question because only few tools are available to study non-proper metric spaces of this sort.
As a matter of fact, it is already a known open problem to decide whether $(\mathbb Z, d_{2})$ and $(\mathbb Z,d_{3})$ are bi-Lipschitz equivalent (attributed to R.\ Schwartz \cite{nathanson2011problems}*{Problem 6}). 

\smallskip

In this work we investigate a question related to \cref{qu:intro:metrics}, but where we do not entirely forget the group structure of $\mathbb Z$. Since $\pm\id$ are the only automorphisms of $\mathbb Z$, it is clear that one cannot produce a bi-Lipschitz group isomorphism $f\colon(\mathbb Z, d_{g_1})\to(\mathbb Z,d_{g_2})$ unless $g_1^n=g_2^m$. But what happens if one only requires the addition to be preserved up to some bounded error?

Namely, we are interested in bi-Lipschitz mappings $f\colon(\mathbb Z, d_{g_1})\to(\mathbb Z,d_{g_2})$ such that 
\begin{equation}\label{eq: quasimorphism}
  d_{g_2}\bigl(f(a+b), f(a)+f(b)\bigr)\leq C  
\end{equation}
for some arbitrarily large but fixed constant $C\geq 0$. Such a function $f$ is called \emph{coarse isomorphism}.
Notice that the extra freedom given by the additive constant $C$ allows for a great flexibility. In the very simple case of $d_1$, the set of mappings $f\colon (\mathbb Z, d_{1})\to(\mathbb Z,d_{1})$ satisfying \eqref{eq: quasimorphism} are known as quasi-morphisms. It is immediate to verify that any fixed $\lambda\in \mathbb R$ gives rise to a quasimorphism via $n\mapsto\floor{\lambda n}$ (conversely, one can also show that every quasi-morphism is close to one such multiplication map).
Classifying all coarse automorphisms of $(\mathbb Z, d_{g})$ for $g>1$ seems to be a very challenging problem.

The main application of the present paper makes substantial progress in the direction of answering \cref{qu:intro:metrics} for coarse isomorphism. Namely, we prove the following.

\begin{alphthm}\label{thm:intro}
  Let $g_1,g_2 \in \mathbb N$ and assume the prime divisors of $g_1$ and $g_2$ do not coincide.
  Then $(\mathbb Z,d_{g_1})$ and $(\mathbb Z,d_{g_2})$ are not coarsely isomorphic.
\end{alphthm}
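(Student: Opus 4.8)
The plan is to exploit the coarse-isomorphism invariant $\mathfrak S$ advertised in the abstract, which records the exponents $n$ for which the $n$-th power map is a coarse equivalence. In the additive coarse group $(\mathbb Z,d_g)$ the $n$-th power map is multiplication $M_n\colon x\mapsto nx$, so I set
\[
  \mathfrak S_g \coloneqq \{\, n\ge 1 : M_n \text{ is a coarse equivalence of } (\mathbb Z,d_g)\,\}
\]
and aim to prove, for $g\ge 2$, that $\mathfrak S_g=\{\,n\ge 1 : \text{every prime dividing } n \text{ also divides } g\,\}$. Granting this, \cref{thm:intro} follows quickly. We may assume $g_1,g_2\ge 2$, since if one of them is $1$ the two spaces are distinguished by properness (balls in $(\mathbb Z,d_g)$ are infinite for $g\ge 2$ but finite for $g=1$). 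A coarse isomorphism $f\colon(\mathbb Z,d_{g_1})\to(\mathbb Z,d_{g_2})$ intertwines the power maps up to bounded error, because $f(nx)=f(x+\cdots+x)$ stays uniformly close to $nf(x)$; hence $M_n$ is a coarse equivalence on one side exactly when it is on the other, giving $\mathfrak S_{g_1}=\mathfrak S_{g_2}$. If the prime divisors of $g_1$ and $g_2$ differ, some prime $p$ divides exactly one of them and so lies in exactly one of $\mathfrak S_{g_1},\mathfrak S_{g_2}$, a contradiction.

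Next I would reduce the computation of $\mathfrak S_g$ to a single statement about one prime. Every $M_n$ is $n$-Lipschitz (by subadditivity, $\ell_g(nx)\le n\,\ell_g(x)$) and coarsely surjective (each integer lies within $\max_{|r|\le n/2}\ell_g(r)$ of $n\mathbb Z$), so $M_n$ is a coarse equivalence precisely when it is a coarse embedding. Two observations then organise everything. First, $M_g$ is an isometry onto its image, since shifting a signed-digit expansion gives $\ell_g(gx)=\ell_g(x)$; hence each $M_{g^k}$ is a coarse equivalence. Second, factorisations of $M_{g^k}$ transfer this property: if every prime of $n$ divides $g$ then $n\mid g^k$ for some $k$, and writing $M_{g^k}=M_{g^k/n}\circ M_n$ with $M_{g^k/n}$ Lipschitz gives $d_g(x,y)\asymp d_g(g^kx,g^ky)\le (g^k/n)\,d_g(nx,ny)$, so $M_n$ is a coarse embedding. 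Running the same factorisation backwards, $M_n$ being a coarse equivalence forces $M_p$ to be one for every prime $p\mid n$. Thus the whole computation reduces to: if $p$ is prime and $p\nmid g$, then $M_p$ is not a coarse embedding.

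To exhibit this failure of properness, let $d=\operatorname{ord}_p(g)$ and put
\[
  m_t \coloneqq \frac{g^{dt}-1}{p}=\frac{g^d-1}{p}\bigl(1+g^d+g^{2d}+\cdots+g^{(t-1)d}\bigr).
\]
Since $g^{dt}-1=g^{dt}+(-g^{0})$, we have $\ell_g(p\,m_t)=2$ for every $t$, whereas I claim $\ell_g(m_t)\to\infty$; together these show $M_p$ sends points arbitrarily far apart in the domain to points at bounded distance in the range, so it is not a coarse embedding. The idea behind the claim is that $m_t$ consists of $m_1\coloneqq(g^d-1)/p$ copied into $t$ disjoint base-$g$ windows of width $d$; since $1\le m_1<g^d/2$, consecutive blocks neither overlap nor interact through carrying. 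Using that $\ell_g$ is realised by the balanced (length-minimising) signed-digit expansion, the expansion of $m_t$ is then $t$ non-interacting copies of that of $m_1$, so $\ell_g(m_t)=t\,\ell_g(m_1)$ with $\ell_g(m_1)\ge 1$.

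The main obstacle is exactly this lower bound $\ell_g(m_t)\to\infty$. Upper bounds on word length come for free from subadditivity, but a lower bound must simultaneously defeat all signed-digit representations, and naive invariants (ordinary digit sums, reductions modulo $g^k-1$, counting) are not enough, since cancellations can drastically shorten representations. I expect to isolate the required input as a Nathanson-type lemma asserting that the balanced expansion computes $\ell_g$ and that disjoint, non-interacting digit windows contribute additively; once that is in place, the windowed structure of $m_t$ closes the argument.
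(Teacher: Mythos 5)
Your overall architecture coincides with the paper's: the same spectrum invariant, the same reduction to primes via the factorisation/divisibility closure of the power maps, the same proof that $\mu_g$ is coarsely invertible (the paper proves $\floor{\variable/g}$ is a contraction directly in \cref{lem:Z Sq invertible}; your digit-shift isometry argument is an equivalent route through \cref{thm:nat-representation}), and even essentially the same test family: your $p\,m_t=g^{dt}-1$ with $d$ the multiplicative order of $g$ mod $p$ is the paper's set $C=\{g^{(p-1)i}-1\}$ from \cref{lem:Z Sq not invertible}, up to replacing Fermat's exponent by the order. The one place you genuinely diverge --- the lower bound $\ell_g(m_t)\to\infty$ --- is exactly where your proposal stops being a proof. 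You defer it to an ``expected Nathanson-type lemma'', and the justification you sketch, namely that $1\le m_1<g^d/2$ forces the blocks not to interact, is inadequate for even $g$. Exact additivity $\ell_g(m_t)=t\,\ell_g(m_1)$ follows from uniqueness plus minimality of the special representation \emph{only if} the concatenated digit string is itself a special representation, and that requires two separate checks you have not made: (i) the special representation of $m_1$ must occupy only positions $0,\dots,d-1$ --- this is true, but not because $m_1<g^d/2$; for odd $g$ the width-$d$ window represents exactly $[-(g^d-1)/2,(g^d-1)/2]$, while for even $g$ one must show the representable set is an interval of radius strictly larger than $g^d/2$ (it is, roughly $0.6\,g^d$, because condition \ref{item:even-rep} of \cref{thm:nat-representation} permits alternating extremal digits); and (ii) condition \ref{item:even-rep} must hold at every seam between consecutive copies. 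Point (ii) is acute for $g=2$, where \emph{every} nonzero digit has modulus $g/2$ and the condition is non-adjacency: since $m_1$ is odd, its bottom digit is nonzero, so you need its top nonzero digit to sit at position $\le d-2$, which holds because $p\ge 3$ gives $m_1\le(2^d-1)/3$ --- but again only after an estimate on which integers require position $d-1$. None of this is fatal (the checks do go through), but as written the crux of the theorem is asserted rather than proved, and asserted with an insufficient reason.

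For comparison, the paper sidesteps all window-and-seam bookkeeping by compactifying: it extends Nathanson's representation to the $g$-adic integers with a digit-stability statement ($x\equiv y \pmod{g^n}$ forces $\epsilon_i(x)=\epsilon_i(y)$ for $i\le n-2$, \cref{prop:nathanson for Zq}), deduces that any integer sequence converging in $\mathbb Z_g$ to a non-integer has divergent word length (\cref{lem:word length in Zq}), and observes that your very sequence $m_t$ converges to $-p^{-1}\in\mathbb Z_g\setminus\mathbb Z$. Note that the even-$g$ delicacy does not disappear there either --- it is concentrated in the proof of digit stability, where $\pm g/2$ represent the same class mod $g$ (precisely the point the referee flagged in the original version) --- but it is handled once, uniformly, rather than per-seam. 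So either import the paper's $\mathbb Z_g$-limit argument, or supply the window and seam lemmas above; your block-additivity route is salvageable and arguably more elementary, but currently the key input is missing. A minor repair elsewhere: your disposal of $g=1$ ``by properness'' is shakier than needed, since having infinite bounded sets is not obviously invariant under coarse equivalence (bounded spaces are all coarsely trivial); it is cleaner to note that your own computation gives $\minvariant{\mathbb Z,d_1}=\mathcal P$ while $\minvariant{\mathbb Z,d_{g_2}}$ omits every prime not dividing $g_2$.
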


In particular, it follows that $(\mathbb Z,d_{2})$ and $(\mathbb Z,d_{3})$ are not coarsely isomorphic, which answers \cite{coarse_groups}*{Question 9.3.1}.
To illustrate the technique of proof and further motivate the interest in coarse isomorphisms, it is necessary to say a few words about the theory of coarse groups.

\subsection{Context: coarse groups}
Without entering the details, a \emph{coarse space} $\crse X$ is a set $X$ together with a \emph{coarse structure} $\mathcal E$. That is, $X$ is a set where there is a well defined notion of ``uniform boundedness''. Typical examples are given by metric spaces---which we also call \emph{metric coarse spaces}: in this case uniform boundedness simply means ``uniformly bounded diameter''.

A \emph{coarse map} $\crse{f\colon X\to Y}$ is an equivalence class of controlled functions. Here, a function $f\colon X\to Y$ is said to be \emph{controlled} if it sends uniformly bounded subsets to uniformly bounded subsets, and two functions $f,f'$ are equivalent if they are \emph{close} (that is, the images $f(x)$ and $f'(x)$ stay uniformly close to one another as $x\in X$ varies).
Coarse spaces and coarse maps are the objects and morphisms of the \emph{category of coarse spaces}. A \emph{coarse equivalence} is a coarse map that has a coarse inverse (\emph{i.e.}\ an isomorphism in the category of coarse spaces). A typical example of coarse equivalence is given by (equivalence classes of) quasi-isometries between metric spaces.

A \emph{coarse group} is a coarse space $\crse G$ together with a coarse map $\crse{\cop\colon G\times G\to G}$ that satisfies the group axioms within the \emph{coarse category} (so that, for instance, associativity only needs to hold up to uniformly bounded error). In other words, a coarse group is a group object in the category of coarse spaces. In this work we will be mostly concerned with some rather concrete examples of coarse groups, and our arguments require very little coarse geometric background. For this reason, we will avoid formally introducing all the relevant notions, and refer the interested reader to \cite{coarse_groups} for an in-depth introduction to the subject.

A \emph{coarse homomorphism} between two coarse groups $\crse G$, $\crse H$ is a coarse map $\crse{f\colon G\to H}$ that commutes with the coarse-group operations up to uniformly bounded error (\emph{i.e.}\ such that $f(g)\ast f(g')$ stays uniformly close to $f(g\ast g')$). It is a \emph{coarse isomorphism} if it is coarsely invertible. In other words, coarse homomorphisms and isomorphisms are just homomorphism and isomorphism among the group objects of the coarse category.

Returning to $(\mathbb Z,d_g)$: the addition $+\colon (\mathbb Z,d_g)\times (\mathbb Z,d_g)\to (\mathbb Z,d_g)$ is a Lipschitz map, and therefore controlled. This means that $(\mathbb Z,d_g)$ can be thought of as a metric coarse group, where $+$ is a representative for the coarse operation $\crse *$ (this is always the case for groups equipped with bi-invariant metrics).
From this perspective, condition \eqref{eq: quasimorphism} means precisely that $f(g)\ast f(g')$ stays uniformly close to $f(g\ast g')$. If $f$ is also Lipschitz, this is equivalent to saying that $\crse f$ is a coarse homomorphism. Conversely, it is not hard to show that for any $g_1,g_2\geq 2$ a coarse isomorphism $\crse f\colon (\mathbb Z,d_g)\to (\mathbb Z,d_g)$ in the sense of coarse groups must admit a bi-Lipschitz representative $f$ (see \cref{appendix}), which is hence a coarse isomorphism in the sense of \cref{ssec:intro: nr theory}.

Rephrasing it in this context, the original number-theoretic \cref{qu:intro:metrics} is precisely asking what values of $g$ give rise to coarsely equivalent spaces, and the variant that we investigate in this work asks the same question about coarsely isomorphic groups.

\subsection{Technique of proof}
To prove \cref{thm:intro}, we introduce a new coarse-algebraic invariant that is simple enough to compute and refined enough to actually tell apart some coarse groups.
The idea is to consider the set of exponents $n$ such that taking the $n$-th power defines a coarse equivalence: this ``spectrum'' is easily shown to be invariant under coarse isomorphisms (\cref{sec:spectra}).
We are then able to completely compute this invariant for $(\mathbb Z,d_g)$ (\cref{thm:invariant for Z Sq}), and \cref{thm:intro} is a direct consequence of this computation.

We are also able to compute this invariant for a different class of coarsifications of $\mathbb Z$ defined using profinite topologies (\cref{thm:invertible iff p in Q}. See \cref{sec:profinite} for details). It turns out that in this case the invariant is sufficiently refined to tell any two profinite coarsifications apart, and results in the following theorem.

\begin{alphthm}\label{thm:intro2}
  If $Q\neq Q'$ are two different sets of primes and $\varcrs{Q}$, $\varcrs{Q'}$ are the associated profinite coarse structures on $\mathbb Z$, then the coarse groups $(\mathbb Z,\varcrs{Q})$ and $(\mathbb Z,\varcrs{Q'})$ are not coarsely isomorphic.
\end{alphthm}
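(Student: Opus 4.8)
The plan is to recover the set of primes $Q$ from the coarse-isomorphism invariant $\minvariant[]{\mathbb Z,\varcrs{Q}}$ introduced in \cref{sec:spectra}. Recall that this spectrum consists of the exponents $n$ for which the $n$-th power map is a coarse equivalence of the coarse group; since $(\mathbb Z,\varcrs{Q})$ is abelian with operation represented by ordinary addition, the $n$-th power map is just multiplication by $n$. Because the spectrum is preserved by coarse isomorphisms, any coarse isomorphism between $(\mathbb Z,\varcrs{Q})$ and $(\mathbb Z,\varcrs{Q'})$ would force the equality $\minvariant[]{\mathbb Z,\varcrs{Q}}=\minvariant[]{\mathbb Z,\varcrs{Q'}}$. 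It therefore suffices to show that the two spectra differ whenever $Q\neq Q'$.

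The main input is the computation \cref{thm:invertible iff p in Q}: for a prime $p$, multiplication by $p$ is a coarse equivalence of $(\mathbb Z,\varcrs{Q})$ if and only if $p\in Q$. Granting this, I claim that the spectrum is exactly the set of integers all of whose prime divisors lie in $Q$. Indeed, if every prime divisor of $n$ lies in $Q$, then multiplication by $n$ is (up to the automorphism $\pm\id$) a composition of the coarse equivalences ``multiplication by $p$'' over the prime factors $p$ of $n$ taken with multiplicity, hence itself a coarse equivalence; conversely, if some prime divisor $p$ of $n$ lies outside $Q$, then the image $n\mathbb Z$ is contained in $p\mathbb Z$, which is not coarsely dense (the obstruction recorded by \cref{thm:invertible iff p in Q}), so multiplication by $n$ is not coarsely surjective. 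Thus the primes occurring in $\minvariant[]{\mathbb Z,\varcrs{Q}}$ are precisely the elements of $Q$, and the spectrum determines $Q$. The theorem follows at once: if $Q\neq Q'$, then after possibly interchanging them there is a prime $p\in Q\setminus Q'$, so that $p\in\minvariant[]{\mathbb Z,\varcrs{Q}}$ while $p\notin\minvariant[]{\mathbb Z,\varcrs{Q'}}$; the spectra differ, and by invariance the two coarse groups are not coarsely isomorphic.

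All of the real difficulty is concentrated in \cref{thm:invertible iff p in Q}. The positive half (for $p\in Q$) should reduce to verifying that the finite transversal $\{0,1,\dots,p-1\}$ of $p\mathbb Z$ is uniformly bounded in $\varcrs{Q}$—making $p\mathbb Z$ coarsely dense and multiplication by $p$ coarsely surjective—together with the properness that provides a coarse inverse. I expect the genuine obstacle to be the negative half: certifying, when $p\notin Q$, that the transversal $\{0,1,\dots,p-1\}$ is \emph{not} uniformly bounded, equivalently that $p\mathbb Z$ is not coarsely dense. This forces one to use an explicit enough description of the bounded sets (equivalently, the controlled entourages) of the profinite coarse structure $\varcrs{Q}$ to rule out every possible bounding, and it is here that one must engage directly with the definition of $\varcrs{Q}$ rather than with the formal properties of the spectrum.
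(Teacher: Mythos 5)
Your top-level argument coincides with the paper's: the power invertibility spectrum is invariant under coarse isomorphism, \cref{thm:invertible iff p in Q} computes $\minvariant{\mathbb Z,\varcrs{Q}}=Q$, and distinct sets of primes therefore give non-isomorphic coarse groups. Granting \cref{thm:invertible iff p in Q}, your conclusion is correct. However, the supporting reasoning you attach contains a genuine error, and it misdirects your diagnosis of where the difficulty lies. You claim that when $p\notin Q$ the obstruction recorded by \cref{thm:invertible iff p in Q} is that $p\mathbb Z$ fails to be coarsely dense, equivalently that the transversal $\{0,1,\dots,p-1\}$ fails to be uniformly bounded in $\varcrs{Q}$. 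This is false: that transversal is finite, hence $\tau_Q$-compact, hence bounded in $\varcrs{Q}$ (the coarse structure is connected), so $n\mathbb Z$ is coarsely dense and $\mu_n$ is coarsely \emph{surjective} for every $n\geq 1$, whatever $Q$ is. The paper makes this explicit in the remark following \cref{thm:invertible iff p in Q}: $\crse\mu_p$ is always a coarsely surjective coarse homomorphism, and the dichotomy is governed entirely by \emph{properness} (cf.\ \cref{rmk: crse equivalence are proper}). Concretely, when $p\notin Q$ the prime $p$ is invertible modulo every $m\in Q^*$, so $p^{-1}$ exists in $\mathbb Z_Q$; choosing integers $a_n\to p^{-1}$, the set $K=\{pa_n : n\in\mathbb N\}\cup\{1\}$ is compact (since $pa_n\to 1$), while $\mu_p^{-1}(K)=\{a_n\}$ accumulates at $p^{-1}\notin\mathbb Z$ and is therefore unbounded (\cref{lem:multiplication not proper}). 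The program you outline for the negative half---using the description of the bounded sets of $\varcrs{Q}$ to rule out every bounding of the transversal---would never terminate successfully, because the statement it targets is false.

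Two smaller repairs. First, the ``conversely'' step in your computation of the full spectrum rests on the same faulty density claim (you argue $\mu_n$ is not coarsely surjective when some $p\mid n$ has $p\notin Q$); this is fixed for free by the closure of $\minvariant[\mathbb N]{\variable}$ under division established in \cref{sec:spectra}: if $n\in\minvariant[\mathbb N]{\crse G}$ and $p\mid n$, then $\crse\mu_{n/p}\circ\crse\mu_n^{-1}$ is a coarse inverse for $\crse\mu_p$. More simply, for the theorem at hand you only need the prime spectrum, exactly as the paper does: a prime $p\in Q\setminus Q'$ (or $Q'\setminus Q$) already separates the two spectra. Second, on the positive half your sketch is serviceable but misses the actual technical content: for $p\in Q$ the paper verifies that $\floor{\variable/p}$ is continuous in the pro-$Q$ topology (\cref{lem:division is continuos}), whence it is a controlled map and a coarse inverse for $\mu_p$; the route through ``coarsely surjective plus proper implies coarse isomorphism'' is available, as the paper's remark notes, but the properness for $p\in Q$ is again certified by that continuity statement rather than by anything about transversals.
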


\cref{thm:intro,thm:intro2} respectively answer Questions 9.3.1 and 9.3.2 in \cite{coarse_groups}.

\begin{remark}
  It is worthwhile to remark that these results are perhaps the first meaningful example where the coarse algebraic structure has really been used to tell some coarse groups apart: up to this point, the main avenue for distinguishing coarse groups was to use purely geometric means.
  Such a geometric approach would have been hard to use in the current setting, as these are very complicated metric spaces (recall that it is still very much unclear whether $(\mathbb Z,d_2)$ and $(\mathbb Z,d_3)$ can be distinguished from one another by purely geometrical means).
\end{remark}

\medskip

\noindent{\bf Structure of the paper.} In \cref{sec:spectra} we define the spectra of power invertibility for general coarse groups. In \cref{sec:metric,sec:profinite} we compute them for the coarse groups of the from $(\mathbb Z,d_g)$ and $(\mathbb Z,\varcrs{Q})$ respectively, thus proving \cref{thm:intro,thm:intro2}. We conclude by listing some open problems in \cref{sec:problems}.

\medskip

\noindent{\bf Acknowledgements.}
We are grateful to the anonymous referee for pointing out an inaccuracy in the original version of \cref{prop:nathanson for Zq}.
This work is funded by the  Deutsche Forschungsgemeinschaft (DFG) as part of the GRK 2491:  Fourier Analysis and Spectral Theory -- Project-ID 398436923.

\section{Spectra of power invertibility}\label{sec:spectra}
Let $\crse G = (G,\mathcal E)$ be a coarse group. We consider the $n$-th power map
\[
  \begin{tikzcd}[row sep = 0]
    \crse\mu_n\colon        \crse G \arrow[r] & \crse G \\
    \phantom{\crse\mu_n\colon} g \arrow[|->,r] & \underbrace{g\ast\cdots \ast g}_{n \text{ times}}
  \end{tikzcd}
\]
($\crse\mu_0$ is the map sending the whole of $\crse G$ to the neutral element).
Observe that $g\ast\cdots \ast g$ is not a well-defined element of $G$, because the operation $\ast$ is only coarsely associative. However, the points obtained by choosing different orders of associations are uniformly close to one another and therefore $\crse\mu_n$ is coarsely well-defined.
More abstractly, $\crse\mu_n$ can be defined as the composition of the diagonal embedding with the appropriate number of iterations of the group operation 
\begin{equation}\label{eq:mu as composition}
  \crse G\xrightarrow{\quad\Delta\quad} \crse G^n \xrightarrow{\ \cop\cdots\cop\ }\crse G.
\end{equation}
This approach makes it clear that $\crse \mu_n$ is a coarse map. We are interested in the following set:
\begin{equation*}
  \minvariant[\mathbb N]{\crse G}\coloneqq\braces{n\in\mathbb N : \crse \mu_n\text{ is a coarse equivalence}}.
\end{equation*}

Observe that for every $m,n\in\mathbb N$ we have
\begin{equation}\label{eq: mu is multiplicative}
  \crse \mu_n\circ\crse \mu_m = \crse \mu_{nm} = \crse \mu_m\circ\crse \mu_n.
\end{equation}
This has some interesting consequences. Since compositions of coarse equivalences are coarse equivalences, it follows that $\minvariant[\mathbb N]{\crse G}$ is closed under multiplication. More interestingly, it is also closed under division. Namely, if $n \in \minvariant[\mathbb N]{\crse{G}}$ and $m \mid n$, then the composition $\crse\mu_{n/m}\circ \crse\mu_{n}^{-1}$ is a coarse inverse for $\crse\mu_m$, hence $m\in \minvariant[\mathbb N]{\crse{G}}$.
In other words, if we let 
\begin{equation*}
  \minvariant{\crse G}\coloneqq\braces{p\text{ prime} : \crse\mu_p\text{ is a coarse equivalence}}
\end{equation*}
we see that $\minvariant[\mathbb N]{\crse G}$ is the sub-semigroup of $\mathbb N$ generated by $\minvariant{\crse G}\cup \{1\}$.

\begin{remark}
  Technically, the statement about semigroup generation is slightly problematic if one includes $0$ in their definition of $\mathbb N$---as we do. Observe however, that $0$ is in $\minvariant[\mathbb N]{\crse G}$ if and only if $\crse G$ is bounded as a coarse space or, equivalently, it is coarsely isomorphic to the trivial coarse group $\{1\}$. This seems a good enough reason not to worry about this issue here.
\end{remark}

One may extend the definition of $\crse{\mu}_{\variable}$ to negative integers by taking inverses:
\[
\begin{tikzcd}
  \crse G \arrow{r}{\cinversefn} \arrow[bend right=25,swap]{rrr}{\crse{\mu}_{-n}}&
  \crse G\arrow{r}{\quad\Delta\quad} &
  \crse G^n \arrow{r}{\ \cop\cdots\cop\ } &
  \crse G.
\end{tikzcd}
\]
It is then natural to set
\begin{equation*}
  \minvariant[\mathbb Z]{\crse G}\coloneqq\braces{k\in\mathbb Z : \crse \mu_k\text{ is a coarse equivalence}}.
\end{equation*}
However, no additional information is gained by doing so. In fact, observe that \eqref{eq: mu is multiplicative} remains true with $m,n\in\mathbb Z$ and that $\crse \mu_{-1}$ is always a coarse equivalence because $\cinversefn$ is an involution. It then follows that $\minvariant[\mathbb Z]{\crse G} = \minvariant[\mathbb N]{\crse G}\cup -\minvariant[\mathbb N]{\crse G}$.

It is also rather natural to consider the set
\[
    \minvariant[\mathbb Q]{\crse G} = \bigl\{ \tfrac{p}{q} : p,q \in \minvariant[\mathbb Z]{\crse G},\ q \neq 0 \bigr\}
\]
of rational exponents that define a coarse equivalence, or its restriction to the multiplicative group $\mathbb Q_+$ of strictly positive rationals 
\[
  \minvariant[\mathbb Q_+]{\crse G}\coloneqq \minvariant[\mathbb Q]{\crse G}\cap \mathbb Q_+.
\]
The former is a multiplicative group whenever $\crse G$ is non-trivial (so that $0\notin\minvariant[\mathbb Q]{\crse G}$), the latter is always a group.
Alternatively, $\minvariant[\mathbb Q]{\crse G}$ consists of the set of rational numbers $p/q$ (with $p,q$ coprime) such that $\crse\mu_{p/q}\coloneqq \crse\mu_p\circ\crse\mu_q^{-1}$ is well-defined and coarsely invertible.

Once again, no new information is gained. In fact, $\minvariant[\mathbb Q_+]{\crse G}\leq\mathbb Q_+$ is the multiplicative subgroup generated by $\minvariant{\crse G}$ and, if $\crse G$ is not trivial, we have 
\[ \minvariant[\mathbb Q]{\crse G}=\minvariant[\mathbb Q_+]{\crse G}\cup -\minvariant[\mathbb Q_+]{\crse G}. \]
Vice versa, since $\minvariant[\mathbb N]{\crse G}$ is closed under division, we have that $\minvariant[\mathbb N]{\crse G}=\minvariant[\mathbb Q]{\crse G}\cap\mathbb N$.

\medskip

In summary, all the sets $\minvariant[\!\variable\!]{\crse G}$ above defined essentially contain the same amount of information about the coarse algebraic structure of $\crse G$.\footnote{%
The only discrepancies appear if $\crse G$ is the trivial coarse group: since triviality is witnessed by coarse invertibility of $\crse\mu_0$, the spectra $\minvariant[\mathbb N]{\crse G},\minvariant[\mathbb Z]{\crse G},\minvariant[\mathbb Q]{\crse G}$ recognize whether $\crse G$ is trivial, while $\minvariant{\crse G},\minvariant[\mathbb Q_+]{\crse G}$ do not.
}
We call them \emph{spectra of power invertibility}. Which spectrum is most convenient to use will presumably vary depending on the situation. The best choice for the rest of this note will be the prime one $\minvariant{\crse G}$.

\medskip

Let now $\crse G$, $\crse H$ be two arbitrary coarse groups, and let $\crse{\mu^{G}}_n$, $\crse{\mu^{H}}_n$ denote their respective power functions. Almost by definition, if $\crse{f\colon G\to H}$ is a coarse homomorphism then
\[
  \crse f \circ \crse{\mu^{G}}_n = \crse{\mu^{H}}_n \circ \crse f.
\]
Since coarse isomorphisms are coarse equivalences, it follows that the spectra of power invertibility are indeed invariant under coarse isomorphism. The following example is due:

\begin{example}
  Consider $\mathbb Z$ with the metric defined by the Euclidean absolute value $\abs\variable$ (\emph{i.e.} the metric $d_1$ from the introduction).
  We may then see $(\mathbb Z,\abs{\variable})$ as a coarse group, and in this case $\crse \mu_n$ is nothing but (the equivalence class of) the multiplication function $\mu_n\colon k\mapsto nk$.
  It is evident that for every $n\in\mathbb N$ the function $\mu_n$ is a coarse equivalence, \emph{e.g.}\ the function $k\mapsto \floor{k/n}$ is a coarse inverse for it.
  This shows that $\minvariant{\mathbb Z,\abs{\variable}}=\mathcal P$ is the set of all primes.

  An alternative way to compute this is to note that $(\mathbb Z,\abs{\variable})$ is coarsely isomorphic to $(\mathbb R,\abs{\variable})$ (consider the inclusion and the floor function). Then $\minvariant[Q_+]{\mathbb Z,\abs{\variable}}=\minvariant[Q_+]{\mathbb R,\abs{\variable}}$, and the latter is obviously the whole of $\mathbb Q_+$.
\end{example}

Before returning to the number-theoretic problems of the introduction, we end this section with a few remarks.

\begin{remark}
  A coarse group $\crse G$ is \emph{coarsely abelian} if $g_1\ast g_2$ and $g_2\ast g_1$ stay uniformly close to one another as $g_1,g_2\in \crse G$ vary. As to be expected, coarsely abelian coarse groups are much easier to investigate than their general counterpart. One useful remark is that if $\crse G$ is coarsely abelian then power maps $\crse \mu_n$ are coarse homomorphisms for every $n\in \mathbb N$.
  It is also easy to see that the converse holds. Namely, if $\crse\mu_2$ is a coarse homomorphism then $\crse G$ is coarsely abelian.
\end{remark}

\begin{remark}
  Computing power invertibility spectra for coarse groups that are not coarsely abelian is a considerably harder task, and it seems legitimate to expect that they will often be trivial.
  The world of groups is however large and wild, so it should be possible to construct examples with interesting power invertibility spectra.
  For instance, it is proved in \cite{guba1987finitely} that there exists a non-abelian group $G$ with the property that every element $g\in G$ admits a unique $n$-th root. It seems plausible that such a group admits a connected coarse structure $\mathcal E$ such that $\crse G = (G,\mathcal E)$ is a coarse group that is not coarsely abelian and has $\minvariant{\crse G}=\mathcal P$.

  In the above, a coarse structure on $G$ is \emph{connected} if every finite subset of $G$ is bounded. From a coarse algebraic point of view, these are the most interesting examples of coarse groups, because every coarse group can be decomposed into a coarsely connected coarse group and a \emph{trivially coarse} group (\emph{i.e.} a group equipped with the trivial coarse structure where the only bounded sets are singletons), see \cite{coarse_groups}*{Corollary 7.3.2}.
 \end{remark}

\begin{remark}\label{rmk: crse equivalence are proper}
  A coarse equivalence $\crse{f\colon X\to Y}$ is always \emph{proper}. Explicitly, if $f\colon X\to Y$ is a representative for $\crse f$ then it must be the case that for every bounded set $B\subseteq Y$ the preimage $f^{-1}(B)$ must also be bounded. This simple observation is all we will need in the proofs of \cref{thm:invariant for Z Sq,thm:invertible iff p in Q}.
\end{remark}

\begin{remark}
  One may define analogous invariants by considering different properties of $\crse \mu_n$. For example, $\crse \mu_n$ being a coarse embedding would be more or less analogous to the (coarse) uniqueness of $n$-th roots.
  
  Alternatively, one may also consider coarse geometric properties of more sophisticated mappings than taking powers. Namely, for any word $w\in F_d$ in the alphabet with $d$-symbols one may consider the associated \emph{word map} $\crse w\colon \crse G^d\to\crse G$. For instance, the word $w=[a_1,a_2]$ defines the mapping $\crse{G\times G\to G}$ sending $g_1,g_2\in\crse G$ to their (coarse) commutator $g_1\ast g_2\ast g_1^{-1}\ast g_2^{-1}$, while the word $w= a^n$ recovers the power map $\crse\mu _n$ we have been using all along. Properties such as coarse invertibility, triviality, surjectivity of a given word map $\crse w$ would then be invariant under coarse isomorphism.
\end{remark}

\begin{remark}
  The description of $\crse \mu_n$ given in \eqref{eq:mu as composition} makes it clear that the definition of $\minvariant[\mathbb N]\variable$ makes sense for group objects within any category, and all the properties we discussed above remain valid. For example, in the category of sets $\minvariant[\mathbb N]G$ becomes the set of exponents such that taking the $n$-th power defines a bijection of $G$ onto itself.
  In the special case where $G$ is a finite group (in the category of sets), the invariant $\minvariant[\mathbb N] G$ is just the set of numbers that are coprime to the order of the group.
\end{remark}

\section{Power invertibility spectra for certain word-metrics on $\mathbb Z$}
\label{sec:metric}
In this section we consider word-metrics $d_g$ on $\mathbb Z$ associated with the infinite generating sets $S_g \coloneqq \{\pm g^n : n \in \mathbb N\}$ for some integer $g>1$. An equivalent way of saying this is that we give $\mathbb Z$ the path-metric defined by the Cayley graph $\cay(\mathbb Z, S_g)$. This metric defines a coarse structure on $\mathbb Z$, which we denote by $\varcrs{\cay(S_g)}$. We may then rephrase our main question from the introduction as:

\begin{question}\label{qu:word-metric}
  Is it the case that $(\mathbb Z,\varcrs{\cay(S_{g_1})})$ and $(\mathbb Z,\varcrs{\cay(S_{g_2})})$ are isomorphic coarse groups if and only if $g_1^n=g_2^m$ for some positive integers $n,m\in\mathbb N$?
\end{question}

Main goal of the section will be the proof of \cref{thm:intro}, which relies on the computation of the power invertibility spectra of the coarse groups $(\mathbb Z, \varcrs{\cay(S_g)})$.
This provides strong evidence towards a positive answer to the \cref{qu:word-metric}, and positively answers the following.

\begin{question}[\cite{coarse_groups}*{Question 9.3.1}]\label{qu:word-metric_easy}
  Is it true that $(\mathbb Z,\varcrs{\cay(S_{2})})$ and $(\mathbb Z,\varcrs{\cay(S_{3})})$ are not coarsely isomorphic?
\end{question}

To compute $\minvariant{\mathbb Z,\varcrs{\cay(S_{g})}}$ it will be very convenient to use the $g$-adic representations constructed by Nathanson in \cite{nathanson2011problems}. Namely, we will use the following.

\begin{theorem}[\cite{nathanson2011problems}*{Theorems 3 and 6}]\label{thm:nat-representation}
  Given $g \geq 2$, every integer $k$ has a unique representation of the form
  \begin{equation*}
    k = \sum_{i = 0}^\infty \epsilon_i g^i
  \end{equation*}
  such that
  \begin{enumerate}
    \item $\epsilon_i \in \{0, \pm 1, \ldots, \pm\floor{g/2}\}$ for all $i$,
    \item $\epsilon_i \neq 0$ for only finitely many $i$,
    \item\label{item:even-rep} if $\abs{\epsilon_i} = {g/2}$, then $|\epsilon_{i+1}| \neq {g/2}$ and $\epsilon_i\epsilon_{i+1} \geq 0$
  \end{enumerate}
  (condition \ref{item:even-rep} is vacuous if $g$ is odd).
  Moreover, $k$ has word length
  \begin{equation*}
    \ell_g(k) = \sum_{i = 0}^\infty \abs{\epsilon_i}.
  \end{equation*}
\end{theorem}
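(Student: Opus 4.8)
The statement is classical (it is Nathanson's), so I only sketch how one would prove it. There are three things to establish: existence of a representation satisfying (1)--(3), its uniqueness, and the length formula. I would treat the odd and even cases of $g$ in parallel, observing that all the genuine subtleties concern the even case, where the digit set $\{0,\pm1,\dots,\pm\floor{g/2}\}$ has one more element than there are residues modulo $g$, and condition (3) is precisely what restores rigidity.

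\emph{Existence.} I would extract the digits greedily. Given $k$, I choose $\epsilon_0$ to be a representative of $k\bmod g$ lying in $\{0,\pm1,\dots,\pm\floor{g/2}\}$, set $k'=(k-\epsilon_0)/g\in\mathbb Z$, and recurse to obtain $\epsilon_1,\epsilon_2,\dots$ as the digits of $k'$. For odd $g$ the representative is unique and this is just the balanced base-$g$ expansion. For even $g$ the class of $g/2$ has the two representatives $\pm g/2$; here I would break the tie by looking one step ahead (equivalently at $k\bmod g^2$) and fixing the sign of $\epsilon_0$ so that (3) holds, i.e.\ so that $\epsilon_0$ agrees in sign with the digit the recursion produces next. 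Since $|k'|\le(|k|+\floor{g/2})/g$, the quotients strictly decrease in absolute value once $|k|$ is large and eventually reach $0$, so only finitely many digits are nonzero, giving (2).

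\emph{Uniqueness.} Given two admissible representations of the same integer, I would subtract them and show that the only admissible representation of $0$ is the zero one. Writing $\sum_i\delta_i g^i=0$ with $\delta_i=\epsilon_i-\epsilon_i'$, let $i_0$ be minimal with $\delta_{i_0}\neq0$; reducing modulo $g^{i_0+1}$ forces $g\mid\delta_{i_0}$, and since $0<|\delta_{i_0}|\le 2\floor{g/2}\le g$ this is impossible for odd $g$ and forces $\{\epsilon_{i_0},\epsilon_{i_0}'\}=\{+g/2,-g/2\}$ for even $g$. I would then feed this into condition (3): the forced value $\pm g/2$ constrains the signs of $\epsilon_{i_0+1},\epsilon_{i_0+1}'$ and prevents them from being $\pm g/2$, and propagating this up the expansion by induction on the position yields a contradiction. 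This induction driven by (3) is the technical heart of the even case.

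\emph{Length formula and main obstacle.} The bound $\ell_g(k)\le\sum_i|\epsilon_i|$ is immediate, since $k=\sum_i\epsilon_i g^i$ already exhibits $k$ as a sum of $\sum_i|\epsilon_i|$ signed generators $\pm g^i$. For the reverse inequality I would identify a word representing $k$ with a finitely supported integer tuple $(a_j)$ satisfying $\sum_j a_j g^j=k$ and of length $\sum_j|a_j|$, so that $\ell_g(k)=\min\sum_j|a_j|$ over all such tuples. I would then analyse the carry move $a_j g^j+a_{j+1}g^{j+1}=(a_j\mp g)g^j+(a_{j+1}\pm1)g^{j+1}$: whenever $|a_j|>\floor{g/2}$ an appropriately signed carry does not increase $\sum_j|a_j|$ while pulling $a_j$ into the admissible range. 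Iterating, every tuple can be transformed without increasing its weight into an admissible representation, which by uniqueness is $(\epsilon_i)$; hence $\sum_i|\epsilon_i|$ is the minimal weight and $\ell_g(k)=\sum_i|\epsilon_i|$. The main obstacle in the whole argument is, again, the even case: both uniqueness and the optimality of the expansion rely on the tie-breaking encoded in (3), and the delicate point is checking that the boundary digits $\pm g/2$ are handled consistently—that the carry reductions never strictly increase the weight there and that (3) selects exactly one representative among the resulting ties.
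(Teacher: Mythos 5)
Your sketch is correct in outline and follows essentially the same route as the paper: the paper does not reprove this statement but imports it from Nathanson (Theorems 3 and 6 of the cited work), and its accompanying remark describes exactly your argument for the length formula---algorithmically reducing an arbitrary word $\sum_j a_j g^j$ via carries to the special form without increasing $\sum_j \abs{a_j}$, with geodesicity then following from uniqueness. Your existence (greedy digits with a one-step look-ahead modulo $g^2$ to break the $\pm g/2$ tie) and uniqueness (subtract, reduce modulo $g^{i_0+1}$, and use condition (3) to rule out $\{\epsilon_{i_0},\epsilon_{i_0}'\}=\{\pm g/2\}$, where in fact a single step already gives $\delta_{i_0+1}\equiv -1 \pmod g$ against $0\leq \delta_{i_0+1}\leq g-2$) are the standard correct digit arguments, and the even-case boundary issues you flag---e.g.\ that the naive carry on $\epsilon_i=\epsilon_{i+1}=g/2$ momentarily raises the weight and needs a chained carry, plus a termination measure for the rewriting---are precisely the points Nathanson's proof handles.
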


The representations as in \cref{thm:nat-representation} are called \emph{special $g$-adic representations}. The equality $\ell_g(k) = \sum_{i = 0}^\infty \abs{\epsilon_i}$ can be understood as saying that the special $g$-adic representations define geodesic paths from $0$ to $k$ in $\cay(\mathbb Z,S_g)$.

\begin{remark}
  In fact, the `moreover' part is the main result of \cref{thm:nat-representation}. This is proven by showing that any writing of $k$ as a sum of $\pm g^i$ (\emph{i.e.}\ a path from $0$ to $k$ in $\cay(\mathbb Z,S_g)$) can be algorithmically reduced to a special $g$-adic representation without increasing the total number of addends (\emph{i.e.}\ the new path is not longer than the original one). Geodesicity then follows from uniqueness.
\end{remark}

The special $g$-adic representations are fairly stable within congruence classes modulo $g^n$. Explicitly, when $\epsilon_n(x)$ denotes the $n$-th coefficient of the special $g$-adic representation of an integer $x$, we have
\begin{equation}\label{eq: congruence mod gn}
  \epsilon_n(x) = \epsilon_n(y) \quad \text{for all} \quad x,y \in \mathbb Z \quad \text{with} \quad x \equiv y \pmod{g^{n+2}}.
\end{equation}
This is easily seen when $g$ is odd, as the special $g$-adic representation is just a different choice of representatives for the usual $g$-adic series expansion, and \eqref{eq: congruence mod gn} is then trivially satisfied (it is even true for $x,y \in \mathbb Z$ with $x \equiv y \pmod{g^{n+1}}$).
When $g$ is even the situation is a little more delicate, as $\pm g/2$ represent the same congruence class modulo $g$.

Let then $g$ be even. To prove \cref{eq: congruence mod gn} it is enough to show that $\epsilon_n(x) =\epsilon_n(y)$ when $y$ is the unique representative of $x$ modulo $g^{n+2}$ in $\{0,\ldots,g^{n+2}-1\}$.
Estimating the geometric series by absolute value, we observe
\begin{equation*}
  \Bigl| \sum_{i=0}^{n+1} \epsilon_i(x) g^i \Bigr| \leq \frac{g}{2} \sum_{i=0}^{n+1} g^i < g^{n+2}.
\end{equation*}By the above inequality the value $y$ is given by either
\begin{equation*}
  y = \sum_{i=0}^{n+1} \epsilon_i(x) g^i\quad \text{or} \quad y = \sum_{i=0}^{n+1} \epsilon_i(x) g^i + g^{n+2},
\end{equation*}
depending on whether $\sum_{i=0}^{n+1}\epsilon_i(x) g^i$ is positive or not.
In the first case, the coefficients $\epsilon_i(x)$ and $\epsilon_i(y)$ are equal for all $i\leq n+1$, as the truncated series is the unique special $g$-adic representation of $y$.

In the second case, the series $\sum_{i=0}^{n+1} \epsilon_i(x) g^i + g^{n+2}$ need not be a special $g$-adic representation of $y$. In fact, when $g=2$ the coefficients $\epsilon_{n+1}(x)$ and $1$ (from $g^{n+2}$) might violate condition \ref{item:even-rep}. If $|\epsilon_{n+1}(x)| \neq g/2$ the series is a special $g$-adic representation of $y$, and we deduce that $\epsilon_i(x)$ and $\epsilon_i(y)$ are equal for all $i\leq n+1$ as before.
When $|\epsilon_{n+1}(x)| = g/2$ we have $|\epsilon_{n}(x)| \neq g/2$. Since
\begin{equation*}
  y = \sum_{i=0}^{n} \epsilon_i(x) g^i + (\epsilon_{n+1}(x) + g) g^{n+1},
\end{equation*}
we can replace $\epsilon_{n+1}(x) + g$ by its special $g$-adic representation to obtain a special $g$-adic representation for $y$ (as $\epsilon_{n}(x) \neq g/2$ ensures that condition \ref{item:even-rep} holds for the newly added coefficients). This proves that $\epsilon_i(x) = \epsilon_i(y)$ for every $i\leq n$, and \cref{eq: congruence mod gn} follows.

\smallskip

This observation allows us to generalize the special $g$-adic representation to ring of $g$-adic integers $\mathbb Z_g = \varprojlim_{i=0}^\infty\mathbb Z/g^i\mathbb Z$, as the coefficients stabilize when taking projective limits modulo $g^n$. We record these observations in the following.

\begin{lemma}\label{prop:nathanson for Zq}
  Given $g \geq 2$, every $x\in \mathbb Z_g$ has a unique special $g$-adic representation
  \(
    x = \sum_{i = 0}^\infty \epsilon_i(x) g^i
  \)
  such that
  \begin{enumerate}
    \item $\epsilon_i \in \{0, \pm 1, \ldots, \pm\floor{g/2}\}$ for all $i$,
    \item if $\abs{\epsilon_i} = {g/2}$, then $|\epsilon_{i+1}| \neq {g/2}$ and $\epsilon_i\epsilon_{i+1} \geq 0$.
  \end{enumerate}
  Moreover, if $x \equiv y \pmod{g^n}$ for $x,y \in \mathbb Z_g$, then $\epsilon_i(x) = \epsilon_i(y)$ for all $i \leq n - 2$.
\end{lemma}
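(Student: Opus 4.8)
The plan is to bootstrap the whole statement from the integer case (\cref{thm:nat-representation}) together with the stability estimate \eqref{eq: congruence mod gn}, which was already proven for integers. The guiding principle is that each coefficient $\epsilon_i$ should depend only on the residue of $x$ modulo $g^{i+2}$, so it makes sense for $g$-adic integers even though the full representation is an infinite object. Concretely, given $x\in\mathbb Z_g$ and an index $i$, I would choose any integer $x'\in\mathbb Z$ with $x'\equiv x\pmod{g^{i+2}}$ (such $x'$ exists because $\mathbb Z\to\mathbb Z/g^{i+2}\mathbb Z$ is surjective) and set $\epsilon_i(x)\coloneqq\epsilon_i(x')$. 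This is well-defined: if $x''$ is another such integer then $x'\equiv x''\pmod{g^{i+2}}$, so \eqref{eq: congruence mod gn} forces $\epsilon_i(x')=\epsilon_i(x'')$. Property (1) is inherited verbatim from the integer approximation. For property (2), which constrains the consecutive pair $(\epsilon_i,\epsilon_{i+1})$, I would pick a single integer $x'$ with $x'\equiv x\pmod{g^{i+3}}$; this one integer simultaneously computes $\epsilon_i(x)=\epsilon_i(x')$ and $\epsilon_{i+1}(x)=\epsilon_{i+1}(x')$, and since $x'$ satisfies (2) as an integer the condition transfers to $x$.

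Next I would verify that $\sum_{i=0}^\infty\epsilon_i(x)g^i$ converges $g$-adically to $x$. Fix $N$ and take an integer $x'$ with $x'\equiv x\pmod{g^{N+2}}$; then $\epsilon_i(x)=\epsilon_i(x')$ for all $i\leq N$, since for such $i$ we have $i+2\leq N+2$. Writing out the finite special representation $x'=\sum_{i\geq0}\epsilon_i(x')g^i$ of the integer $x'$, the tail $\sum_{i\geq N+1}\epsilon_i(x')g^i$ is divisible by $g^{N+1}$, whence $x'\equiv\sum_{i=0}^{N}\epsilon_i(x)g^i\pmod{g^{N+1}}$. Combining with $x\equiv x'\pmod{g^{N+1}}$ yields $x\equiv\sum_{i=0}^{N}\epsilon_i(x)g^i\pmod{g^{N+1}}$, which is exactly convergence.

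For uniqueness, suppose $x=\sum_{i\geq0}\epsilon_i g^i$ is any representation satisfying (1) and (2). For each $N$, the truncation $s_N\coloneqq\sum_{i=0}^{N}\epsilon_i g^i$ is an ordinary integer whose representation still satisfies (1) and (2) — the only pair affected by truncation is $(\epsilon_N,0)$, which satisfies (2) trivially — so by the integer uniqueness of \cref{thm:nat-representation} we get $\epsilon_i(s_N)=\epsilon_i$ for all $i\leq N$. Since the series converges $g$-adically, $s_N\equiv x\pmod{g^{N+1}}$, so for $i\leq N-1$ (hence $i+2\leq N+1$) the integer $s_N$ computes $\epsilon_i(x)$, giving $\epsilon_i(x)=\epsilon_i(s_N)=\epsilon_i$. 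Letting $N\to\infty$ shows $\epsilon_i(x)=\epsilon_i$ for all $i$. Finally, the ``moreover'' is immediate from the definition: if $x\equiv y\pmod{g^n}$ and $i\leq n-2$, choose integers $x',y'$ with $x'\equiv x$ and $y'\equiv y\pmod{g^n}$; then $x'\equiv y'\pmod{g^{i+2}}$ because $i+2\leq n$, so \eqref{eq: congruence mod gn} gives $\epsilon_i(x)=\epsilon_i(x')=\epsilon_i(y')=\epsilon_i(y)$.

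I expect no deep obstacle here, as every assertion reduces to the integer statements already in hand; the work is purely in tracking the index shifts between $g^n$ and $g^{n+2}$. The one step that is more than definition-chasing is the uniqueness argument, where the key observation is that a truncation of a special representation is again special, so that the integer uniqueness of \cref{thm:nat-representation} can be invoked term by term.
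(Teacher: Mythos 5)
Your proposal is correct and follows essentially the same route as the paper, which derives the lemma from the integer stability estimate \eqref{eq: congruence mod gn} by observing that the coefficients of integer approximants stabilize in the projective limit; you have simply written out in full the details the paper leaves implicit. Your verification that truncations of a special representation remain special (so that integer uniqueness from \cref{thm:nat-representation} can be applied termwise) is exactly the right way to settle the uniqueness claim, and your index bookkeeping ($g^{i+2}$, $g^{i+3}$ for consecutive pairs) is accurate throughout.
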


Of course, a $g$-adic number $x \in \mathbb Z_g$ is an integer if and only if $\epsilon_i(x) \neq 0$ for only finitely many indices $i$.
This allows us to make the following, fairly simple observation.

\begin{lemma}\label{lem:word length in Zq}
  Let $x_n$ be a sequence of integers, and suppose that $x_n$ converges to some $x \in \mathbb Z_g$.
  If $x\notin \mathbb Z$ then the lengths $\ell_g(x_n)$ diverge to infinity.
\end{lemma}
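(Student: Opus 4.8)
The plan is to combine the word-length formula $\ell_g(k)=\sum_{i=0}^\infty\abs{\epsilon_i(k)}$ from \cref{thm:nat-representation} with the congruence-stability of the coefficients recorded in \cref{prop:nathanson for Zq}. The guiding observation is that, since $x\notin\mathbb Z$, infinitely many of the coefficients $\epsilon_i(x)$ are nonzero, and any integer whose special $g$-adic representation shares one of these coefficients must pay at least $1$ in its word length for it. So if we can force sufficiently many of the coefficients of $x_n$ to agree with those of $x$, the lengths $\ell_g(x_n)$ will be pushed up.

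Concretely, I would show that $\ell_g(x_n)$ eventually exceeds any prescribed bound $M$. Since $x$ is not an integer, one can select indices $i_1<\cdots<i_M$ with $\epsilon_{i_j}(x)\neq 0$ for every $j$. Set $N\coloneqq i_M+2$. Because $x_n\to x$ in the $g$-adic topology, there is an $n_0$ such that $x_n\equiv x\pmod{g^N}$ for all $n\geq n_0$. Applying the stability clause of \cref{prop:nathanson for Zq} (agreement of coefficients up to index $N-2=i_M$) then yields $\epsilon_{i_j}(x_n)=\epsilon_{i_j}(x)\neq 0$ for every $j$ and every $n\geq n_0$.

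It then remains only to feed this into the length formula: for $n\geq n_0$ we have $\ell_g(x_n)=\sum_{i}\abs{\epsilon_i(x_n)}\geq\sum_{j=1}^M\abs{\epsilon_{i_j}(x_n)}\geq M$, the last inequality holding because each of the $M$ displayed coefficients is a nonzero integer and hence contributes at least $1$. As $M$ was arbitrary, this gives $\ell_g(x_n)\to\infty$.

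There is no genuine obstacle here; the only point requiring a little care is the bookkeeping translating "$g$-adic convergence $x_n\to x$" into "congruence modulo a high power of $g$", which is precisely what makes the stability estimate of \cref{prop:nathanson for Zq} applicable. One should also check that the chosen $N$ is large enough that all the relevant indices $i_1,\dots,i_M$ fall within the guaranteed agreement range $i\leq N-2$, which is exactly why $N=i_M+2$ is taken.
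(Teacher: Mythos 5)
Your proof is correct and takes essentially the same route as the paper's: both translate $g$-adic convergence into congruence modulo $g^N$, invoke the coefficient stability of \cref{prop:nathanson for Zq}, and combine the length formula of \cref{thm:nat-representation} with the fact that $x\notin\mathbb Z$ forces infinitely many nonzero coefficients $\epsilon_i(x)$. Your version merely makes the quantifier bookkeeping explicit (choosing indices $i_1<\cdots<i_M$ and setting $N=i_M+2$) where the paper argues directly that the divergence of $\sum_{i}\abs{\epsilon_i(x)}$ pushes $\ell_g(x_n)\geq\sum_{i=0}^{N-2}\abs{\epsilon_i(x_n)}$ to infinity.
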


\begin{proof}
  Let $x = \sum_{i = 0}^\infty \epsilon_i(x) g^i$ be the special $g$-adic representation of $x$.
  Since the values of $x_n$ converge to $x$, for every $N \in \mathbb N$ there exists an $n_0$ such that $x_n \equiv x \pmod{g^N}$ for all $n \geq n_0$.
  That is, if $\epsilon_i(x_n)$ is the special $g$-adic representation of $x_n$, then $\epsilon_i(x_n) = \epsilon_i(x)$ for all $n\geq n_0$ and $i \leq N - 2$.
  
  If $x$ is not an integer then the sum $\sum_{i = 0}^\infty \abs{\epsilon_i(x)}$ diverges and hence the length $\ell_g(x_n)\geq \sum_{i = 0}^{N-2} |\epsilon_i(x_n)|$ also goes to infinity.
\end{proof}

Now that the preliminaries are in place, it will be easy to compute the power invertibility spectra of $(\mathbb Z,\varcrs{\cay(S_g)})$. We split the proof in the following two lemmas.

\begin{lemma}\label{lem:Z Sq not invertible}
  Let $g \geq 2$ be an integer and $p$ a prime with $(p,g) = 1$. Then the multiplication map $\mu_p \colon \mathbb (\mathbb Z, \varcrs{\cay(S_g)}) \to \mathbb (\mathbb Z, \varcrs{\cay(S_g)})$ is not coarsely invertible.
\end{lemma}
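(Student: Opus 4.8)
The plan is to derive a contradiction from the assumption that $\mu_p$ is a coarse equivalence, using the one necessary condition recorded in \cref{rmk: crse equivalence are proper}: a coarse equivalence must be proper, so that the preimage of every bounded set is bounded. Concretely, I would produce a sequence of integers $y_n$ whose images $\mu_p(y_n) = p y_n$ all lie in a single bounded subset of $(\mathbb Z, \varcrs{\cay(S_g)})$, while the $y_n$ themselves form an unbounded set, i.e.\ $\ell_g(y_n) \to \infty$. Such a sequence immediately violates properness and hence rules out coarse invertibility, so the whole argument reduces to engineering the $y_n$.

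The construction is where the coprimality hypothesis enters. Since $(p,g) = 1$, the prime $p$ is a unit in the ring of $g$-adic integers $\mathbb Z_g$, so $p^{-1} \in \mathbb Z_g$ is well-defined; moreover $p^{-1} \notin \mathbb Z$, because $p \geq 2$ and the inclusion $\mathbb Z \hookrightarrow \mathbb Z_g$ is injective. For each $n$ I would let $y_n$ be the representative in $\{0, \ldots, g^n - 1\}$ of $p^{-1} \bmod g^n$, equivalently the unique integer with $p y_n \equiv 1 \pmod{g^n}$ and $0 \le y_n < g^n$. By construction $y_n \to p^{-1}$ in $\mathbb Z_g$.

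It then remains to verify the two required properties. Since $y_n$ converges to the non-integer $p^{-1} \in \mathbb Z_g$, \cref{lem:word length in Zq} gives $\ell_g(y_n) \to \infty$, so $\{y_n\}$ is unbounded. On the other hand, the bound $0 \le y_n < g^n$ forces $p y_n = 1 + c_n g^n$ with $0 \le c_n < p$; writing this as the single generator $g^0 = 1$ together with $c_n$ copies of $g^n$ exhibits $p y_n$ as a sum of at most $1 + c_n \le p$ elements of $S_g$, whence $\ell_g(p y_n) \le p$ for every $n$. Thus $B \coloneqq \{p y_n : n \in \mathbb N\}$ is bounded while $\mu_p^{-1}(B) \supseteq \{y_n\}$ is unbounded, contradicting \cref{rmk: crse equivalence are proper}.

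The conceptual crux — and the only genuinely non-routine step — is recognizing that \emph{properness}, rather than coarse surjectivity, is the obstruction to exploit, combined with the observation that coprimality furnishes a limit point $p^{-1}$ lying in $\mathbb Z_g \setminus \mathbb Z$; once this is in place, \cref{lem:word length in Zq} does the real work and the estimate $\ell_g(p y_n) \le p$ is elementary. I would double-check only that the $g$-adic limit of the $y_n$ is genuinely $p^{-1}$ and that $p^{-1}$ is not a rational integer, both of which follow at once from injectivity of $\mathbb Z \hookrightarrow \mathbb Z_g$ together with \cref{prop:nathanson for Zq}.
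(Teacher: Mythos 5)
Your proof is correct and follows essentially the same route as the paper: both arguments violate properness (\cref{rmk: crse equivalence are proper}) by exhibiting a sequence of integers converging $g$-adically to $\pm p^{-1}\in\mathbb Z_g\setminus\mathbb Z$, with \cref{lem:word length in Zq} supplying the divergence of the word lengths. The only cosmetic difference is the witness sequence: the paper uses Fermat's little theorem to form exact quotients $x_i=(g^{(p-1)i}-1)/p$, whose images satisfy $\ell_g\leq 2$, whereas you take the residues of $p^{-1}$ modulo $g^n$ and absorb the remainder $c_n g^n$ into the slightly weaker (but equally sufficient) bound $\ell_g(p y_n)\leq p$.
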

\begin{proof}
  To prove that the map $\mu_p$ is not coarsely invertible, it is sufficient to find a bounded subset $C\subset \mathbb Z$ such that $\mu_p^{-1}(C)$ is not bounded (cf.\ \cref{rmk: crse equivalence are proper}), where bounded is a synonym for ``finite $d_g$-diameter''.
  We choose the set
  \begin{equation*}
    C = \bigl\{ g^{(p-1) i} - 1 : i \in \mathbb N \bigr\},
  \end{equation*}
  which is obviously bounded because $\ell_g(g^n - 1) \leq 2$ for all $n \geq 0$.

  Since $(p,g) = 1$, we obtain
  \begin{equation*}
    g^{(p-1)i} - 1 \equiv 0 \pmod p,
  \end{equation*}
  which shows that $p$ divides $g^{(p-1)i} - 1$. We thus have:
  \begin{equation*}
    \mu_p^{-1}(C) = \biggl\{ \frac{g^{(p-1)i} - 1}{p}\ :\ i \in \mathbb N \biggr\}\subset \mathbb Z.
  \end{equation*}

  For every $n\geq 1$, $p$ is invertible modulo $g^n$. The congruence class of $p^{-1}$ modulo $g^n$ is therefore well defined, and we can view $p^{-1}$ as an element of $\mathbb Z_g$.  
  Of course, we then have
  \begin{equation*}
    \frac{g^{(p-1)i} - 1}{p} 
    \equiv p^{-1}g^{(p-1)i} - p^{-1} 
    \equiv -p^{-1} \pmod{g^{(p-1)i}}.
  \end{equation*}
  If we let $x_i \coloneqq (g^{(p-1)i} - 1)/p\in \mu_p^{-1}(C)$, the above congruence implies that the $x_i$ converge to $-p^{-1} \in \mathbb Z_g$ as $i \to \infty$. 
  Since $1/p$ is not an integer, \cref{lem:word length in Zq} implies that the word lengths $\ell_g(x_i)$ diverge to infinity, and therefore $\mu_p^{-1}(C)$ is not bounded in $(\mathbb Z, \varcrs{\cay(S_g)})$.
\end{proof}

\begin{lemma}\label{lem:Z Sq invertible}
  Let $g \geq 2$ be an integer. Then $k\mapsto \floor{k/g}$ is a coarse inverse for $\mu_g\colon (\mathbb Z, \varcrs{\cay(S_g)}) \to (\mathbb Z, \varcrs{\cay(S_g)})$.
\end{lemma}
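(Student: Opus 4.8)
The plan is to verify directly that the representative $\nu\colon k\mapsto\floor{k/g}$ meets the three requirements of a coarse inverse: that $\nu\circ\mu_g$ and $\mu_g\circ\nu$ are each close to the identity, and that $\nu$ is controlled. The first condition is immediate, since $\floor{gk/g}=k$ gives $\nu\circ\mu_g=\id$ on the nose. The other two conditions are where the geometry of $d_g$ enters, and both will follow from a single structural observation.

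The key observation is that $\mu_g$ is in fact an \emph{isometry} of $(\mathbb Z,d_g)$ onto its image, i.e.\ $\ell_g(gm)=\ell_g(m)$ for every $m$. To see this, take the special $g$-adic representation $m=\sum_{i\geq 0}\epsilon_i g^i$ supplied by \cref{thm:nat-representation}; then $gm=\sum_{i\geq 0}\epsilon_i g^{i+1}$ is obtained by shifting every digit up by one index and inserting $0$ in position zero. This shifted sequence still satisfies conditions (1)--(3) of \cref{thm:nat-representation}: shifting preserves the constraint (3) on consecutive pairs, and the new bottom pair $(0,\epsilon_0)$ creates no violation. By uniqueness it is therefore \emph{the} special $g$-adic representation of $gm$, so the length formula yields $\ell_g(gm)=\sum_i\abs{\epsilon_i}=\ell_g(m)$. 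Consequently $d_g(gk,gk')=\ell_g\bigl(g(k-k')\bigr)=\ell_g(k-k')=d_g(k,k')$.

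Next I would check that $\mu_g\circ\nu$ is uniformly close to the identity. Writing $k=g\floor{k/g}+r_k$ with $r_k\coloneqq k\bmod g\in\{0,\dots,g-1\}$, we get $d_g\bigl(k,\mu_g\nu(k)\bigr)=\ell_g(r_k)$. Since there are only finitely many possible remainders, this is bounded by a constant $C=C(g)$ independent of $k$ (an explicit bound is $\ell_g(r)\leq\floor{g/2}+1$ for $0\le r<g$, obtained by inspecting the special representation of each $r$). Hence $\mu_g\circ\nu\approx\id$. Controlledness of $\nu$ then follows formally: if $d_g(k,k')\le R$, then using that $\mu_g$ is an isometry and that $\mu_g\nu$ is $C$-close to the identity,
\[
  d_g\bigl(\nu(k),\nu(k')\bigr)=d_g\bigl(\mu_g\nu(k),\mu_g\nu(k')\bigr)\le 2C+d_g(k,k')\le 2C+R,
\]
so $\nu$ sends bounded sets to bounded sets and is controlled. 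Together with $\nu\circ\mu_g=\id$ and $\mu_g\circ\nu\approx\id$, this exhibits $\nu$ as a coarse inverse for $\mu_g$.

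The only step carrying genuine content is the isometry claim $\ell_g(gm)=\ell_g(m)$; everything else is bookkeeping. I expect the main (mild) obstacle to be the controlledness of $\nu$, since floor maps need not be controlled for arbitrary metrics — but as shown above this is obtained \emph{for free} from the isometry property, so once $\ell_g(gm)=\ell_g(m)$ is established the proof essentially closes itself.
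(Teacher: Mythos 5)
Your proof is correct, and it takes a genuinely different route from the paper's. The paper never touches Nathanson's representations in this lemma: it lifts a geodesic path $k=k_0,\ldots,k_n=k'$ with $\abs{k_i-k_{i-1}}=g^{m_i}$ and checks edge by edge that $\floor{\variable/g}$ sends each step either to a generator step of size $g^{m_i-1}$ (when $m_i\geq 1$, using that $k_i\equiv k_{i-1}\pmod g$) or to a step of size at most $1=g^0$ (when $m_i=0$), concluding that $\floor{\variable/g}$ is a contraction; the two closeness identities are then immediate. You instead establish the stronger structural fact that $\mu_g$ is an isometry, $\ell_g(gm)=\ell_g(m)$, by shifting the special $g$-adic representation and invoking the uniqueness and length formula of \cref{thm:nat-representation}, and then extract controlledness of $\nu$ formally from the isometry together with $\mu_g\circ\nu$ being $C$-close to the identity. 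Your shift argument is sound --- the new bottom pair $(0,\epsilon_0)$ cannot violate condition (3) since $0\neq g/2$, and the inequality $\ell_g(m)\leq\ell_g(gm)$ is exactly where uniqueness does real work, because an arbitrary geodesic for $gm$ could a priori use $g^0$-steps and so cannot simply be divided by $g$ --- and the triangle-inequality bookkeeping at the end is fine. Comparing the two: the paper's argument is elementary and self-contained (it uses only the definition of the word metric, reserving the representation machinery for \cref{lem:Z Sq not invertible}) and directly yields that $\nu$ is $1$-Lipschitz; yours costs the uniqueness half of \cref{thm:nat-representation} but buys the reusable fact that $\mu_g$ is an isometric embedding, from which controlledness of the floor map comes for free. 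It is worth noting that $\ell_g(gm)\leq\ell_g(m)$ is trivial (multiplication by $g$ maps $S_g$ into $S_g$) and the reverse inequality also follows from the paper's contraction property via $\ell_g(m)=\ell_g(\floor{gm/g})\leq\ell_g(gm)$, so your isometry statement could alternatively be obtained without Nathanson's theorem.
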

\begin{proof}
  We show that $\floor{\variable/g}$ is a contraction. Let $k, k'\in\mathbb Z$ be arbitrary, and let $n=d_{g}(k,k')$. This means that there are sequences $k=k_0,\ldots ,k_n=k'$ and $m_i\in\mathbb N$ such that $\abs{k_i-k_{i-1}}= g^{m_i}$ for every $i=1,\ldots,n$.
  Observe that if $m_i\geq 1$ then $k_i$ and $k_{i-1}$ are in the same residue class mod $g$, hence 
  \[
    \abs*{\floor*{\frac{k_i}{g}}-\floor*{\frac{k_{i-1}}{g}}}
    =\abs*{\frac{k_i}{g}-\frac{k_{i-1}}{g}}
    = g^{m_{i-1}}.
  \]
  On the other hand, if $m_i=0$ then ${k_i}/{g}$ and $k_{i-1}/g$ either coincide or they differ by at most one. Using the triangle inequality along the path $\floor{k_i/g}$ shows that $d_{g}(\floor{k/g},\floor{k'/g})\leq n$, as desired.

  This proves that $\floor{\variable/g}$ is a controlled map. It is clear that it is a coarse inverse for $\mu_g$, because $\floor{\variable/g}\circ\mu_g$ is the identity and $\mu_g\circ \floor{\variable/g}$ is within distance $g$ from it.
\end{proof}

It is now immediate to compute the power invertibility spectrum of $(\mathbb Z, \varcrs{\cay(S_g)})$. Namely, we prove the following.

\begin{theorem}\label{thm:invariant for Z Sq}
  For every $g \in \mathbb N$ with $g \geq 2$ we have
  \begin{equation*}
    \minvariant{\mathbb Z, \varcrs{\cay(S_g)}} = \{ p \in \mathcal P : p \mid g \}.
  \end{equation*}
\end{theorem}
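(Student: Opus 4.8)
The plan is to deduce the theorem directly from the two preceding lemmas, together with the divisibility-closure property of the spectrum recorded in \cref{sec:spectra}. Both inclusions are short.

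For the inclusion $\{p\in\mathcal P : p\mid g\}\subseteq \minvariant{\mathbb Z,\varcrs{\cay(S_g)}}$, I would start from \cref{lem:Z Sq invertible}, which exhibits $k\mapsto\floor{k/g}$ as a coarse inverse for $\crse\mu_g$. Hence $\crse\mu_g$ is a coarse equivalence, so $g\in\minvariant[\mathbb N]{\mathbb Z,\varcrs{\cay(S_g)}}$. Now I invoke the fact, established in \cref{sec:spectra}, that $\minvariant[\mathbb N]{\crse G}$ is closed under division: if $n$ lies in the spectrum and $m\mid n$, then $\crse\mu_{n/m}\circ\crse\mu_n^{-1}$ is a coarse inverse for $\crse\mu_m$, so $m$ lies in the spectrum too. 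Applying this with $n=g$ shows that every divisor of $g$ is in $\minvariant[\mathbb N]{\mathbb Z,\varcrs{\cay(S_g)}}$; in particular every prime $p\mid g$ satisfies $p\in\minvariant{\mathbb Z,\varcrs{\cay(S_g)}}$.

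For the reverse inclusion, I would argue by contraposition. Let $p$ be a prime with $p\nmid g$; since $p$ is prime this is equivalent to $(p,g)=1$. Then \cref{lem:Z Sq not invertible} shows that $\mu_p$ is not coarsely invertible, so $p\notin\minvariant{\mathbb Z,\varcrs{\cay(S_g)}}$. Equivalently, any prime in the spectrum must divide $g$. Combining the two inclusions gives the asserted equality.

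I do not expect any genuine obstacle here, as the statement is a straightforward synthesis of \cref{lem:Z Sq invertible,lem:Z Sq not invertible}. The only point deserving a moment of care is the appeal to divisibility-closure: rather than verifying directly that $\crse\mu_p$ is a coarse equivalence for each prime $p\mid g$ (which would require constructing coarse inverses individually), one promotes the single invertibility statement for $\crse\mu_g$ to all of its prime divisors at once via the multiplicativity relation \eqref{eq: mu is multiplicative}.
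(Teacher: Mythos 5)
Your proposal is correct and follows essentially the same route as the paper: \cref{lem:Z Sq invertible} puts $g$ in $\minvariant[\mathbb N]{\mathbb Z,\varcrs{\cay(S_g)}}$, divisibility-closure of the spectrum promotes this to all primes $p\mid g$, and \cref{lem:Z Sq not invertible} rules out primes coprime to $g$. No gaps to report.
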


\begin{proof}  
    \cref{lem:Z Sq invertible} shows that $g \in \minvariant[\mathbb N]{\mathbb Z, \varcrs{\cay(S_g)}}$. Since $\minvariant[\mathbb N]\variable$ is closed under division, it follows that if $p\mid g$ then $p\in \minvariant{\mathbb Z, \varcrs{\cay(S_g)}}$.
    Conversely, if $p \nmid g$ then $p \notin \minvariant{\mathbb Z, \varcrs{\cay(S_g)}}$ by \cref{lem:Z Sq not invertible}.
\end{proof}

\begin{corollary}[\cref{thm:intro}]\label{cor: word-metric crse iso}
  $(\mathbb Z,\varcrs{\cay(S_{g_1})})$ and $(\mathbb Z,\varcrs{\cay(S_{g_2})})$ are isomorphic coarse groups then $g_1$ and $g_2$ must have the same prime factors.
\end{corollary}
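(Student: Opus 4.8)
The plan is to read off the statement directly from the invariance of the power invertibility spectrum under coarse isomorphism, combined with the explicit computation carried out in \cref{thm:invariant for Z Sq}. Concretely, suppose $\crse f\colon (\mathbb Z,\varcrs{\cay(S_{g_1})}) \to (\mathbb Z,\varcrs{\cay(S_{g_2})})$ is a coarse isomorphism. First I would recall the naturality identity established in \cref{sec:spectra}: any coarse homomorphism intertwines the power maps, so $\crse f \circ \crse\mu_n = \crse\mu_n \circ \crse f$ for every $n$. Since $\crse f$ is a coarse equivalence (and compositions of coarse equivalences are coarse equivalences), this forces $\crse\mu_n$ to be a coarse equivalence on the source precisely when it is one on the target. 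Applying this with $n$ ranging over primes yields the equality of prime spectra $\minvariant{\mathbb Z,\varcrs{\cay(S_{g_1})}} = \minvariant{\mathbb Z,\varcrs{\cay(S_{g_2})}}$.

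Next I would substitute the computation of these spectra. By \cref{thm:invariant for Z Sq}, the left-hand side equals $\{p\in\mathcal P : p \mid g_1\}$ and the right-hand side equals $\{p\in\mathcal P : p \mid g_2\}$. Equating these two sets of primes is exactly the assertion that $g_1$ and $g_2$ have the same prime divisors, which is the claim. Note that only the forward implication is being proved here; the converse is tied to the open \cref{qu:word-metric} and is not asserted.

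There is no genuine obstacle remaining, since all the substantive work lives in the two lemmas \cref{lem:Z Sq invertible,lem:Z Sq not invertible} that feed \cref{thm:invariant for Z Sq}. The only point deserving a line of care is the invariance step itself: one must make sure that a coarse \emph{isomorphism} (rather than a mere coarse homomorphism) truly identifies the two spectra. This is precisely what the identity $\crse f \circ \crse\mu_n = \crse\mu_n \circ \crse f$ provides once $\crse f$ is coarsely invertible, because conjugating $\crse\mu_n$ by the coarse equivalence $\crse f$ preserves the property of being a coarse equivalence. With that observation in hand the corollary is immediate.
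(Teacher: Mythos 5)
Your proof is correct and follows exactly the route the paper intends: the corollary is immediate from the invariance of the power invertibility spectrum under coarse isomorphism (via the intertwining identity $\crse f \circ \crse{\mu^{G}}_n = \crse{\mu^{H}}_n \circ \crse f$ from \cref{sec:spectra}) together with the computation in \cref{thm:invariant for Z Sq}. Your added care about conjugation by $\crse f$ preserving coarse equivalence is exactly the right justification, and correctly noting that only the forward implication is claimed matches the paper.
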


\begin{corollary}[Answer to \cref{qu:word-metric_easy}]
  $(\mathbb Z,\varcrs{\cay(S_{2})})$ and $(\mathbb Z,\varcrs{\cay(S_{3})})$ are not isomorphic coarse groups.
\end{corollary}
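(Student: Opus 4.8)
The plan is to apply \cref{thm:invariant for Z Sq} (or equivalently \cref{cor: word-metric crse iso}) directly to the two specific values $g=2$ and $g=3$. First I would compute the prime power-invertibility spectra: by \cref{thm:invariant for Z Sq} we have
\[
  \minvariant{\mathbb Z,\varcrs{\cay(S_2)}} = \{p \in \mathcal P : p \mid 2\} = \{2\}
\]
and likewise $\minvariant{\mathbb Z,\varcrs{\cay(S_3)}} = \{3\}$. Since $\{2\} \neq \{3\}$, the two spectra differ.

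The conclusion then follows from the coarse-isomorphism invariance of $\minvariant{\variable}$ established in \cref{sec:spectra}: were the two coarse groups isomorphic, their prime spectra would have to coincide. As they do not, the coarse groups $(\mathbb Z,\varcrs{\cay(S_2)})$ and $(\mathbb Z,\varcrs{\cay(S_3)})$ cannot be isomorphic. Equivalently, one may invoke \cref{cor: word-metric crse iso} and observe that $2$ and $3$ have disjoint sets of prime factors. There is no genuine obstacle here: the statement is an immediate specialization of the general computation, and the only ``work'' is substituting the two numerical values into the already-proven formula.
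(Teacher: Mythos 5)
Your proof is correct and matches the paper's intended argument: the corollary is an immediate specialization of \cref{thm:invariant for Z Sq} (equivalently \cref{cor: word-metric crse iso}), using the coarse-isomorphism invariance of the spectrum from \cref{sec:spectra} together with the computation $\minvariant{\mathbb Z,\varcrs{\cay(S_2)}}=\{2\}\neq\{3\}=\minvariant{\mathbb Z,\varcrs{\cay(S_3)}}$.
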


\section{Power invertibility spectra of profinite coarsifications of $\mathbb Z$}\label{sec:profinite}
One way of defining coarse structures on groups is by means of group topologies, and a natural class of topologies on groups are the profinite ones. In this section we compute the invertibility spectra for all the profinite coarsifications of $\mathbb Z$.
Let $Q\subseteq \mathcal P$ be a non\=/empty set of primes, and consider set of moduli
\[
  Q^{*}\coloneqq\braces{m\in\mathbb{N}\ :\ (m,p)=1 \text{ $\forall p$ prime }p\notin Q},
\]
\emph{i.e.}\ the positive numbers that are products of powers of primes in $Q$.
The family of quotients $\mathbb{Z}/m\mathbb{Z}$ with $m\in Q^{*}$ forms an inverse system of finite groups, whose limit is the \emph{pro-$Q$ completion} of $\mathbb{Z}$
\[
 \mathbb{Z}_Q\coloneqq \varprojlim_{m\in Q^{*}} \mathbb{Z}/m\mathbb{Z}.
\]
The group $\mathbb{Z}_Q$ is given the limit topology $\tau_Q$ (each finite quotient is seen as a discrete group), which is a metrizable, compact, group topology on $\mathbb{Z}_Q$.

For a given prime $p$, the \emph{pro\=/$p$ completion} is $\mathbb{Z}_p\coloneqq \mathbb{Z}_{\{p\}}$, \emph{i.e.} the group of $p$\=/adic integers equipped with the $p$-adic topology.
By the Chinese Reminder Theorem, the pro\=/$Q$ completion is seen to be isomorphic (as a topological group) to a product of pro\=/$p$ completions:
\[
 \mathbb{Z}_Q\cong\prod_{p\in Q}\mathbb{Z}_p.
\]

Now, for any choice of $Q$, the natural homomorphism $\mathbb{Z}\to\mathbb{Z}_Q$ is an embedding with dense image (when identifying $\mathbb{Z}_Q$ with the product of pro\=/$p$ completions, the embedding $\mathbb{Z}\hookrightarrow\prod_{p\in Q}\mathbb{Z}_p$ is the diagonal embedding).
In particular, $(\mathbb Z,\tau_Q)$ is an abelian topological group. This lets us define a coarse structure $\varcrs{Q}$ on $\mathbb Z$ by declaring that a family of subsets $(A_i)_{i\in I}$ of $\mathbb Z$ is uniformly bounded (a.k.a.\ $\varcrs{Q}$-controlled) if and only if there exists a subset $K\subset \mathbb Z$ that is $\tau_Q$-compact and such that for every $i\in I$ we have $A_i\subseteq k_i+K$ for some $k_i\in \mathbb Z$. Note that $\mathbb Z$ itself is \emph{not} bounded, as it is not compact in the pro-$Q$ topology.

It is shown in \cite{coarse_groups}*{Section 9.2} that $(\mathbb Z,\varcrs{Q})$ is a non-trivial connected coarse group. Moreover, it is also shown that $\varcrs{Q}=\varcrs{Q'}$ if and only if $Q=Q'$, hence these are $2^{\aleph_0}$ distinct coarsifications of $\mathbb Z$.
As for \cref{qu:word-metric}, this means that the identity function $\id\colon \mathbb Z\to\mathbb Z$ is not a coarse isomorphism, but does not answer the following:

\begin{question}[\cite{coarse_groups}*{Question 9.3.2}]\label{qu:profinite}
  Is it the case that $(\mathbb Z,\varcrs{Q})$ and $(\mathbb Z,\varcrs{Q'})$ are isomorphic coarse groups if and only if $Q=Q'$?
\end{question}

We will use power invertibility spectra to answer the above affirmatively as stated in \cref{thm:intro2}. Once again, the proof relies on two lemmas.

\begin{lemma}\label{lem:multiplication not proper}
    If $p\notin Q$, then $\mu_p\colon \mathbb Z\to\mathbb Z$ is not proper with respect to the $Q$-adic topology.
\end{lemma}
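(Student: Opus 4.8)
The plan is to exhibit a single $\varcrs{Q}$-bounded set $B\subseteq\mathbb Z$ whose preimage $\mu_p^{-1}(B)$ is unbounded; by the description of properness in \cref{rmk: crse equivalence are proper} this is exactly what it means for $\mu_p$ to fail to be proper. First I would record the concrete meaning of boundedness in $\varcrs{Q}$: unwinding the definition of the coarse structure, a subset $B\subseteq\mathbb Z$ is bounded if and only if it is contained in a translate of a $\tau_Q$-compact subset of $\mathbb Z$, which—since $\mathbb Z_Q$ is itself compact and Hausdorff—is equivalent to saying that the closure $\overline B$ taken inside $\mathbb Z_Q$ is contained in $\mathbb Z$. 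In other words, $B$ is bounded precisely when it has no accumulation points in $\mathbb Z_Q\setminus\mathbb Z$.

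The construction then exploits the asymmetry created by the hypothesis $p\notin Q$. Since the prime $p$ is coprime to every modulus $m\in Q^{*}$, it is invertible modulo each such $m$, and these inverses are compatible under the projections defining $\mathbb Z_Q=\varprojlim_{m\in Q^{*}}\mathbb Z/m\mathbb Z$; hence $p$ is a unit in $\mathbb Z_Q$ and $p^{-1}\in\mathbb Z_Q$. Crucially $p^{-1}\notin\mathbb Z$, as $p\geq 2$. Using that $\mathbb Z$ embeds densely in $\mathbb Z_Q$, I would pick a sequence of integers $x_i$ with $x_i\to p^{-1}$ in $\mathbb Z_Q$. Because multiplication by $p$ is continuous, the images satisfy $p x_i\to p\cdot p^{-1}=1$.

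I then set $B\coloneqq\{p x_i : i\in\mathbb N\}\cup\{1\}$. This equals the closure of the convergent sequence $(px_i)$, all of whose terms and whose limit $1$ lie in $\mathbb Z$, so $\overline B\subseteq\mathbb Z$ and $B$ is bounded by the criterion above. On the other hand, every $x_i$ lies in $\mu_p^{-1}(B)$, and $x_i\to p^{-1}\in\mathbb Z_Q\setminus\mathbb Z$, so $\overline{\mu_p^{-1}(B)}$ contains the non-integer $p^{-1}$; hence $\mu_p^{-1}(B)$ is unbounded and $\mu_p$ is not proper.

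The only genuinely delicate point is the first step, namely establishing the working description of $\varcrs{Q}$-bounded sets as those whose closure lies inside $\mathbb Z$; once this is in hand the argument is a direct translation of the metric case (\cref{lem:Z Sq not invertible}), with the role played there by the $g$-adic non-integer $-p^{-1}$ now taken by $p^{-1}\in\mathbb Z_Q$. A minor thing to check is that passing to the preimage does not spoil unboundedness: one must observe that $pn=1$ has no integer solution, so $\mu_p^{-1}(B)=\{x_i\}$ and in particular still accumulates at $p^{-1}$.
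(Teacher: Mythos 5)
Your proof is correct and takes essentially the same route as the paper's: both exploit that $p$ is a unit in $\mathbb Z_Q$ (invertible modulo every $m\in Q^*$), choose integers $x_i\to p^{-1}\in\mathbb Z_Q\setminus\mathbb Z$, and use the compact set $\{p x_i : i\in\mathbb N\}\cup\{1\}$, whose preimage $\{x_i\}$ accumulates only at the non-integer $p^{-1}$ and is therefore not compact. The only cosmetic difference is that you additionally spell out the characterization of $\varcrs{Q}$-bounded sets as those with closure in $\mathbb Z_Q$ contained in $\mathbb Z$ and conclude coarse unboundedness directly, whereas the paper states the lemma purely as topological non-properness and defers the translation to coarse non-properness to the proof of \cref{thm:invertible iff p in Q}.
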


\begin{proof}
    Interestingly, the proof of this lemma is quite similar to the proof of \cref{lem:Z Sq not invertible}. We again rely on the observation that $p$ is invertible in $\mathbb Z_Q$, because it has an inverse modulo $m$ for every $m\in Q^*$.
    Therefore, we may pick a sequence of integers $a_n$ that converges to $p^{-1}\in \mathbb Z_Q {\setminus\,} \mathbb Z$ and let
  \begin{equation*}
    K \coloneqq \{ p a_n : n \in \mathbb N \}\cup\{1\}\subset \mathbb Z.
  \end{equation*}
    Since $p a_n$ converges to $1$ in $\mathbb Z_Q$, this set is compact. It also has close resemblance to the bounded set used in the proof of \cref{lem:Z Sq not invertible} and can be exploited in much the same way.

    Namely, we observe that the preimage of $K$ is given by
  \begin{equation*}
    \mu_p^{-1}(K) = \{a_n : n \in \mathbb N\}.
  \end{equation*}
  Since the $a_n$ converge to $p^{-1}$ and $p^{-1} \notin \mu_p^{-1}(K)$, this shows that $\mu_p^{-1}(K)$ is not compact.
\end{proof}

\begin{lemma}\label{lem:division is continuos}
  If $p\in Q$, then the function $\floor{\variable/p}\colon \mathbb Z\to\mathbb Z$ sending $k\mapsto\floor{\frac{k}{p}}$ is continuous with respect to the $Q$-adic topology.
\end{lemma}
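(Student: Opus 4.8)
The plan is to verify continuity directly from the definition of the profinite topology $\tau_Q$, for which a neighborhood basis of any integer $n$ is given by the cosets $n + m\mathbb Z$ with $m \in Q^{*}$. Unwinding this, continuity of $\floor{\variable/p}$ as a self-map of $(\mathbb Z,\tau_Q)$ amounts to the following assertion: for every $m \in Q^{*}$ there exists $m' \in Q^{*}$ such that $k \equiv k' \pmod{m'}$ implies $\floor{k/p} \equiv \floor{k'/p} \pmod{m}$. (This is in fact \emph{uniform} continuity, which is all the stronger.) The crucial input is that, since $p\in Q$, the modulus $m' \coloneqq pm$ again lies in $Q^{*}$, and I claim this choice of $m'$ works.

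To establish the claim I would use Euclidean division with remainder in $\{0,\ldots,p-1\}$: for any integer $k$ write $k = p\floor{k/p} + r$ with $0 \le r < p$, so that $\floor{k/p}$ is the quotient and $r$ the residue of $k$ modulo $p$. Given $k \equiv k' \pmod{pm}$, set $q = \floor{k/p}$, $q' = \floor{k'/p}$ and write $k = pq + r$, $k' = pq' + r'$ with $0 \le r,r' < p$. From $k - k' \equiv 0 \pmod{pm}$ one first reads off $r \equiv r' \pmod{p}$; since $\abs{r - r'} < p$, this forces $r = r'$. Substituting back gives $p(q - q') \equiv 0 \pmod{pm}$, hence $q \equiv q' \pmod{m}$, i.e.\ $\floor{k/p} \equiv \floor{k'/p} \pmod{m}$, exactly as needed.

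The argument is entirely elementary, so there is no serious obstacle; the only points demanding a little care are (i) checking that the Euclidean-division description of $\floor{k/p}$ remains valid for all integers, including negative ones, once the remainder is normalized to $\{0,\ldots,p-1\}$, and (ii) the fact that the hypothesis $p\in Q$ enters precisely to guarantee $pm \in Q^{*}$. This is the mirror image of the role the hypothesis plays in \cref{lem:multiplication not proper}: there $p\notin Q$ made $p$ invertible in $\mathbb Z_Q$ and spoiled properness of $\mu_p$, whereas here $p\in Q$ is exactly what lets division by $p$ respect the congruences defining $\tau_Q$.
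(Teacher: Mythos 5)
Your proof is correct, but it takes a genuinely different route from the paper's. You work directly with the congruence description of $\tau_Q$ --- basic neighborhoods are the cosets $n+m\mathbb Z$ with $m\in Q^{*}$ --- and establish the quantitative, uniform statement that $k\equiv k'\pmod{pm}$ implies $\floor{k/p}\equiv\floor{k'/p}\pmod{m}$, via the Euclidean division $k=p\floor{k/p}+r$ with $r\in\{0,\dots,p-1\}$ (which, as you rightly check, remains valid for negative $k$ with this normalization of the remainder); the hypothesis $p\in Q$ enters only to guarantee $pm\in Q^{*}$. The paper instead splits $\mathbb Z$ into the residue classes $X_i$ modulo $p$ (clopen precisely because $p\in Q$), observes that on $X_0=p\mathbb Z$ the floor function is honest division $x\mapsto x/p$, and verifies continuity coordinatewise in $\mathbb Z_Q\cong\prod_{q\in Q}\mathbb Z_q$ using the $q$-adic absolute values, handling the remaining $X_i$ by translating to $X_0$. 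Your argument is more elementary and self-contained: it needs neither the Chinese Remainder product decomposition nor absolute values, and it produces an explicit modulus of uniform continuity ($m'=pm$), which also makes the controlledness estimate in the proof of \cref{thm:invertible iff p in Q} entirely transparent. The paper's approach is softer and localizes the discontinuity of the floor function into the clopen partition, which is conceptually tidy but invokes more of the structure theory of $\mathbb Z_Q$; both proofs are complete and use the hypothesis $p\in Q$ in essentially dual ways, exactly as you observe in your closing remark.
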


\begin{proof}
We split $\mathbb Z$ into residue classes
\begin{equation*}
  X_i = \{x \in \mathbb Z : x \equiv i \pmod p\},
\end{equation*}
for $i=0,\ldots, p-1$, and observe that each $X_i$ is a clopen in the pro-$Q$ topology.
It is then sufficient to check that the restriction of $\floor{\variable/p}$ to each of the $X_i$ is continuous (in the $Q$-adic topology).

Since $\mathbb Z_Q\cong\prod_{q\in Q}\mathbb Z_q$ is given by a product topology, it is sufficient to check that the mapping $X_i \to \mathbb Z_q$ sending $x$ to $\floor{\tfrac{x}{p}}$ is continuous for all $q \in Q$.
This is immediate for $X_0 =p\mathbb Z$: here $\floor{{\variable}/{p}}$ is simply equal to $({\variable}/{p})$, and the claim can be verified \emph{e.g.}\ using that the $q$-adic topology is induced by the $q$-adic absolute value $|\variable|_q$.
The claim follows easily for the other $X_i$ as well, because the restriction of $\floor{{\variable}/{p}}$ to $X_i$ equals the composition of $x\mapsto x-i$ and $({\variable}/{p})\colon X_0\to \mathbb Z_q$.
\end{proof}

We may now compute the power invertibility spectra:

\begin{theorem}\label{thm:invertible iff p in Q}
  For every $Q\subseteq\mathcal P$ we have $\minvariant{\mathbb Z, \varcrs Q} = Q$.
\end{theorem}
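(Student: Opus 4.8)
The plan is to prove the two inclusions separately, according to whether a prime $p$ lies in $Q$, exploiting that $(\mathbb Z,\varcrs Q)$ is coarsely abelian so that $\crse\mu_p$ is represented by the honest multiplication map $\mu_p\colon k\mapsto pk$. The two lemmas above are tailored to the two directions: \cref{lem:multiplication not proper} obstructs invertibility when $p\notin Q$, while \cref{lem:division is continuos} supplies the candidate inverse when $p\in Q$.

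First I would dispatch the easy inclusion $\minvariant{\mathbb Z,\varcrs Q}\subseteq Q$. If $p\notin Q$, then by \cref{lem:multiplication not proper} the map $\mu_p$ is not proper with respect to $\tau_Q$; that is, there is a $\tau_Q$-compact (hence $\varcrs Q$-bounded) set whose preimage is unbounded. Since every coarse equivalence is proper (\cref{rmk: crse equivalence are proper}), $\mu_p$ cannot be a coarse equivalence, so $p\notin\minvariant{\mathbb Z,\varcrs Q}$.

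For the reverse inclusion $Q\subseteq\minvariant{\mathbb Z,\varcrs Q}$, fix $p\in Q$ and propose $\floor{\variable/p}$ as a coarse inverse to $\mu_p$. The identity $\floor{\variable/p}\circ\mu_p=\id$ holds on the nose, and $\mu_p\circ\floor{\variable/p}$ differs from the identity by the residue $k-p\floor{k/p}\in\{0,\dots,p-1\}$, a finite and hence bounded set, so the two composites are close in the coarse sense. The one substantial point is to check that $\floor{\variable/p}$ is actually controlled for $\varcrs Q$. Here \cref{lem:division is continuos} gives continuity with respect to $\tau_Q$, and I would upgrade this to controlledness by noting that the continuity argument in fact produces a continuous extension to the compact group $\mathbb Z_Q$: thus $\floor{\variable/p}$ sends each $\varcrs Q$-bounded set, contained in some $\tau_Q$-compact $K$, to the continuous image $\floor{K/p}$, which is again compact and in particular bounded. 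To handle an arbitrary controlled family $A_i\subseteq k_i+K$, I would use the quasi-additivity $\floor{(a+b)/p}-\floor{a/p}-\floor{b/p}\in\{0,1\}$ to write $\floor{(k_i+K)/p}\subseteq\floor{k_i/p}+\bigl(\floor{K/p}+\{0,1\}\bigr)$, exhibiting a single compact set that controls the whole image family. This shows $\floor{\variable/p}$ is controlled, hence a genuine coarse inverse, so $p\in\minvariant{\mathbb Z,\varcrs Q}$.

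The main obstacle I anticipate is precisely this bridge from continuity to controlledness: continuity alone does not in general force a map to be coarsely controlled, and the argument leans essentially on the precompactness of $(\mathbb Z,\tau_Q)$—which turns $\tau_Q$-boundedness into relative compactness in $\mathbb Z_Q$—together with the translation-invariant shape of $\varcrs Q$ and the near-homomorphism property of the floor map that absorbs translations up to bounded error.
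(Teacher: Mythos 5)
Your proposal is correct and follows essentially the same route as the paper: the same two lemmas, the same candidate coarse inverse $\floor{\variable/p}$, and even the same key inclusion $\floor{(k+K)/p}\subseteq\floor{k/p}+\bigl(\floor{K/p}\cup(\floor{K/p}+1)\bigr)$ with compactness of $\floor{K/p}$ supplied by \cref{lem:division is continuos} (your detour through a continuous extension to $\mathbb Z_Q$ is unnecessary, since a continuous image of a compact $K\subset\mathbb Z$ is already compact). The only gloss is in the direction $p\notin Q$, where passing from ``$\mu_p^{-1}(K)$ is not compact'' to ``$\mu_p^{-1}(K)$ is not $\varcrs{Q}$-bounded'' uses that $\mu_p^{-1}(K)$ is closed (by continuity of $\mu_p$), hence would be compact if contained in a translate of a compact set---a one-line point the paper makes explicit.
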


\begin{proof}
  If $p\notin Q$, we claim that $\mu_p\colon (\mathbb Z,\mathcal E_Q)\to (\mathbb Z, \mathcal E_Q)$ is not proper (as a coarse map). In fact, by \cref{lem:multiplication not proper} there exists a compact (and hence $\mathcal E_Q$-bounded) subset $K\subseteq(\mathbb Z, \tau_Q)$ whose preimage $\mu_p^{-1}(K)$ is not compact in $(\mathbb Z, \tau_Q)$. Since $\mu_p$ is continuous, $\mu_p^{-1}(K)$ is already closed, so it cannot be relatively compact (the proof of \cref{lem:multiplication not proper} also shows explicitly that it has an accumulation point at infinity). This means that $\mu_p^{-1}(K)$ is not $\mathcal E_Q$-bounded, proving our claim.

  Let now $p\in Q$. We claim that $\floor{\variable/p}\colon(\mathbb Z,\mathcal E_Q)\to (\mathbb Z, \mathcal E_Q)$ is a coarse inverse for $\mu_p$. Observe that $\floor{\variable/p}\circ \mu_p =\id_{\mathbb Z}$ and that $\mu_p\circ\floor{\variable/p}$ only differs from $\id_{\mathbb Z}$ by elements in $\{0,\dots,p-1\}$ which is finite (and hence bounded). 
  If we prove that $\floor{\variable/p}$ is controlled, this show that it is a coarse inverse for $\mu_p$.

  This can be verified directly, observing that for every compact $K\subset \mathbb Z$ and $k\in \mathbb Z$ we have
  \[
      \floor*{\frac{k+K}{p}}\subseteq \floor*{\frac{k}{p}} + \left(\floor*{\frac{K}{p}} \cup \Bigparen{\floor*{\frac{K}{p}}+1} \right).
  \]
  Since $\floor*{{K}/{p}}$ is compact by \cref{lem:division is continuos}, this shows that uniformly $\mathcal E_Q$-bounded sets are sent to uniformly $\mathcal E_Q$-bounded sets.
\end{proof}

\begin{corollary}[\cref{thm:intro2}]
  If $Q\neq Q'$ then $(\mathbb Z,\varcrs{Q})$ and $(\mathbb Z,\varcrs{Q'})$ are not coarsely isomorphic.
\end{corollary}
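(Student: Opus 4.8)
The plan is to deduce the statement directly from the explicit computation in \cref{thm:invertible iff p in Q}, together with the coarse-isomorphism invariance of the prime spectrum $\minvariant{\variable}$ that was recorded in \cref{sec:spectra}. Since all the substantive work has already been carried out, the proof is essentially a one-line substitution, and I would phrase it by contraposition.

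Concretely, I would assume that $(\mathbb Z,\varcrs{Q})$ and $(\mathbb Z,\varcrs{Q'})$ are coarsely isomorphic, witnessed by some coarse isomorphism $\crse f$. The key observation (made in \cref{sec:spectra}) is that any coarse homomorphism intertwines the power maps, $\crse f\circ\crse{\mu^{G}}_n=\crse{\mu^{H}}_n\circ\crse f$, and that a coarse isomorphism is in particular a coarse equivalence. Consequently the set of exponents for which $\crse\mu_n$ is a coarse equivalence is transported by $\crse f$, so the prime spectra must agree:
\[
  \minvariant{\mathbb Z,\varcrs{Q}}=\minvariant{\mathbb Z,\varcrs{Q'}}.
\]
Plugging in the values computed in \cref{thm:invertible iff p in Q}, the left-hand side equals $Q$ and the right-hand side equals $Q'$, whence $Q=Q'$. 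Taking the contrapositive gives the corollary: if $Q\neq Q'$, then the two coarse groups cannot be coarsely isomorphic.

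There is no genuine obstacle here, as the entire content resides in \cref{thm:invertible iff p in Q} and the earlier invariance discussion. The only point requiring a moment of care is to confirm that one is applying the invariance to the \emph{prime} spectrum $\minvariant{\variable}$ rather than to one of the other spectra, since (as noted in the footnote in \cref{sec:spectra}) the prime spectrum is insensitive to triviality of the coarse group. This causes no problem in the present setting, because $(\mathbb Z,\varcrs{Q})$ is non-trivial and coarsely connected, so no degenerate case can intervene.
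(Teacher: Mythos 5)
Your proof is correct and is exactly the argument the paper intends: the corollary follows immediately by combining the coarse-isomorphism invariance of the prime spectrum $\minvariant{\variable}$ from \cref{sec:spectra} with the computation $\minvariant{\mathbb Z,\varcrs{Q}}=Q$ of \cref{thm:invertible iff p in Q}. Your additional remark on non-triviality is harmless but not needed, since the invariance of the spectrum under coarse isomorphism holds unconditionally.
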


\begin{remark}
  The second part of the proof of \cref{thm:invertible iff p in Q} can be simplified a little using some more theory. Namely, since $\crse \mu_p$ is a coarsely surjective coarse homomorphism, it follows from \cite{coarse_groups}*{Proposition 5.2.11} that it is a coarse isomorphism if and only if it is proper (as a coarse map).
  \cref{lem:division is continuos} easily implies that $\mu_p\colon \mathbb Z\to \mathbb Z$ is topologically proper when $p\in Q$, and properness as a coarse map is then an immediate consequence.
\end{remark}

\section{Open Problems}\label{sec:problems}

As explained, a complete solution to \cref{qu:word-metric} remains elusive.
Since the case for $(\mathbb Z, \varcrs{\cay(S_2)})$ and $(\mathbb Z, \varcrs{\cay(S_3)})$ is now settled, the ``easiest'' case that remains open is:

\begin{problem}
    Is it true that $(\mathbb Z,\varcrs{\cay(S_{6})})$ and $(\mathbb Z,\varcrs{\cay(S_{12})})$ are not coarsely isomorphic?
\end{problem}

We list below other problems concerning coarsifications of $\mathbb Z$.
As in \cref{sec:profinite}, let $Q^*$ be the set of products of powers of elements in $Q\subseteq\mathcal P$.
As in \cref{sec:metric}, we may then consider the induced word metric and thus obtain a coarse group $(\mathbb Z, \varcrs{Cay(Q^*)})$.
It is shown in \cite{nathanson2011geometric}*{Theorem 3} (and also follows from \cite{jarden2007sums}, see \cite{hajdu2012representing}) that if $Q$ a non-empty finite set of primes then $(\mathbb Z, \varcrs{Cay(Q^*)})$ is not bounded, and is hence a non-trivial coarse group.

\begin{problem}
  Given $Q_1,Q_2\subset \mathcal P$ non-empty and finite, is it the case that $(\mathbb Z, \varcrs{Cay(Q_1^*)})$ and $(\mathbb Z, \varcrs{Cay(Q_2^*)})$ are coarsely isomorphic if and only if $Q_1=Q_2$?
\end{problem}

The relation between profinite and word-metric coarsifications of $\mathbb Z$ is also not well understood. Specifically, we ask the following.

\begin{problem}[cf. \cite{coarse_groups}*{Problem 9.3.3}]
    Given a non-empty set of primes $Q$, is $(\mathbb Z,\varcrs{Q})$ ever coarsely isomorphic to $(\mathbb Z,\varcrs{Cay(S)})$ for some (infinite) $S\subset\mathbb Z$? That is, is $\varcrs{Q}$ ever a \emph{geodesic coarse structure} \textup{(\cite{coarse_groups}*{Definition 2.2.13.})}?
\end{problem}

If $S$ is a finite generating set of $\mathbb Z$, it is a classical result that every coarse homomorphism $(\mathbb Z, \varcrs{Cay(S)})\to(\mathbb Z, \varcrs{Cay(S)})$ is within bounded distance from a linear map $\mathbb R\to \mathbb R$. We do not know whether more exotic coarsifications of $\mathbb Z$ allow for much wilder behaviour. For instance, we do not know the answer to the following version of \cite{coarse_groups}*{Problem 9.3.5}.

\begin{problem}
  Given $(\mathbb Z,\mathcal E)$ and $(\mathbb Z,\mathcal E')$ coarsifications of the kind discussed in this paper (word metric or profinite), can there exist a coarse homomorphism $(\mathbb Z,\mathcal E)\to(\mathbb Z,\mathcal E')$ whose image is neither bounded nor coarsely surjective?
\end{problem}

\appendix
\section{Appendix: coarse vs.\ bi-Lipschitz equivalence}
\label{appendix}

In number-theoretic contexts, it is far more common to encounter bi-Lipschitz equivalences as opposed to coarse equivalences. Fortunately, it is not hard to show that in the setting of \cref{sec:metric} these two notions coincide. The following arguments are fairly well-known and only included for the convenience of the interested reader.

The first observation is that if $(G,d_G)$ and $(H,d_H)$ are graphs equipped with their graph metrics, and $f\colon G\to H$ is a controlled map, then an iterated application of the triangle inequality shows that $f$ is Lipschitz with Lipschitz constant $\sup\braces{d_H(f(x),f(x')) : x,x'\in G,\ d_G(x,x')=1}$.
If a coarse equivalence $\crse {f\colon G\to H}$ admits a bijective representative $f\colon G\to H$, then both $f$ and $f^{-1}$ are controlled and hence Lipschitz, which proves that $(G,d_G)$ and $(H,d_H)$ are bi-Lipschitz equivalent (this argument applies equally well to all uniformly discrete coarsely geodesic metric spaces).

The question then becomes whether every coarse equivalence between graphs admits a bijective representative. The answer is negative in general, but it is true for graphs that locally have ``enough
space'' (see \cite{whyte_amenability_2008}). Since the Cayley graphs that we considered in this paper have infinite degree, it is not hard to see that every coarse equivalence has a bijective representative. Namely, the following is true:

\begin{lemma*}\label{lem:coarse is injective}
    Let $(G, d_G)$ and $(H, d_H)$ be a metric spaces with $G, H$ countable.
    Assume there exist disjoint covers $G = \bigsqcup_i B_i$ and $H = \bigsqcup_i C_i$ by infinite sets $B_i$ and $C_i$ of uniformly bounded diameter.
    Then every coarse equivalence $\crse{f\colon G \to H}$ has a bijective representative.
\end{lemma*}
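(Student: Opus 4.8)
The plan is to realize the desired bijective representative as a perfect matching in an auxiliary bipartite graph built from the given block decompositions, where the decisive trick is to let adjacency depend \emph{only} on which blocks two points lie in, so that every vertex acquires infinite degree. Since $\crse{f}$ is a coarse equivalence I first fix a controlled representative $f$; having a coarse inverse, $f$ is coarsely surjective, so there is a constant $R$ such that every $y\in H$ lies within distance $R$ of $f(G)$. Let $D$ bound the diameters of all the $B_i$ and all the $C_i$; by controlledness of $f$ the images $f(B_i)$ have uniformly bounded diameter, say $\le D'$. Writing $i(x)$ and $j(y)$ for the indices of the blocks containing $x\in G$ and $y\in H$, I form the bipartite graph $\Gamma$ on $G\sqcup H$ by declaring $x\sim y$ if and only if $d_H\bigl(f(B_{i(x)}),C_{j(y)}\bigr)\le R$. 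By construction adjacency depends only on the pair of blocks, so whenever a point is joined to one element of a block it is joined to that whole (infinite) block.

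Next I would check that every vertex of $\Gamma$ has infinite degree. For $x\in G$: choosing any $x_0\in B_{i(x)}$, its image $f(x_0)$ lies in some block $C_{j_0}$, so $d_H(f(B_{i(x)}),C_{j_0})=0$ and $x$ is adjacent to the whole infinite set $C_{j_0}$. For $y\in H$: coarse surjectivity yields $x_0$ with $d_H(f(x_0),y)\le R$, whence $d_H(f(B_{i(x_0)}),C_{j(y)})\le R$ and $y$ is adjacent to the whole infinite block $B_{i(x_0)}$. (Note that only controlledness and coarse surjectivity of $f$ are used here; properness is not needed for this lemma.)

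Because both $G$ and $H$ are countable and every vertex of $\Gamma$ has infinite degree, a perfect matching is produced by a routine back-and-forth argument: enumerate $G$ and $H$ and interleave their enumerations, at each step extending the current finite partial matching by joining the next still-unmatched vertex to one of its neighbours that has not yet been used. Such a neighbour always exists, since the vertex has infinitely many neighbours while only finitely many vertices have been used so far. The union of these partial matchings is a perfect matching, i.e.\ a bijection $g\colon G\to H$. To see that $g$ is close to $f$, I unwind the edge $x\sim g(x)$: there are $x'\in B_{i(x)}$ and $y'\in C_{j(g(x))}$ with $d_H(f(x'),y')\le R+1$ (picking a pair that realises the infimum up to $1$), and then $d_H(f(x),g(x))\le d_H(f(x),f(x'))+d_H(f(x'),y')+d_H(y',g(x))\le D'+(R+1)+D$, a bound independent of $x$. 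Hence $g$, being a bijection at bounded distance from the controlled map $f$, is a bijective representative of $\crse{f}$.

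The main obstacle to anticipate is precisely that the metric spaces of interest (such as the Cayley graphs $\cay(\mathbb Z,S_g)$) have infinite balls, so the naive matching graph---join $x$ to every $y$ with $d_H(f(x),y)\le S$---is not locally finite; for such graphs Hall's condition on both sides does \emph{not} by itself guarantee a perfect matching, as standard counterexamples to the infinite marriage theorem show. Routing adjacency through entire blocks is exactly what converts the block hypothesis into the clean ``infinite degree everywhere'' condition, which simultaneously defeats those counterexamples and renders the back-and-forth construction immediate; so the real content of the proof is the choice of $\Gamma$, after which everything is bookkeeping.
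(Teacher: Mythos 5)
Your proof is correct, but it takes a genuinely different route from the paper's. You build an auxiliary bipartite graph on $G \sqcup H$ whose adjacency depends only on the pair of blocks, observe that every vertex then has infinite degree, and extract a perfect matching by a back-and-forth construction; closeness of the resulting bijection to $f$ follows from the uniform constants $D$, $D'$, $R$, and all of these steps check out (including the estimate $d_H(f(x),g(x)) \leq D' + (R+1) + D$). The paper instead works in each direction separately: since $G = \bigsqcup_i f^{-1}(C_i)$ with $f^{-1}(C_i)$ countable and $C_i$ infinite, one chooses injections $g_i \colon f^{-1}(C_i) \to C_i$ assembling to an injection $g \colon G \to H$ close to $f$; the same construction applied to a representative of the coarse inverse gives an injection $\tilde g \colon H \to G$, and the two are merged into a bijection via the Cantor--Schr\"oder--Bernstein theorem, with a final check that the merged map stays close to $g$ because $g \circ \tilde g$ is close to $\id_H$. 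The trade-offs are real: the paper's argument is shorter (no auxiliary graph, no matching), but its merging step genuinely uses the coarse inverse, whereas your argument, as you correctly note, consumes only controlledness and coarse surjectivity of a single representative, so it in fact proves the mildly stronger statement that any controlled coarsely surjective map between such spaces (proper or not) is close to a bijection --- in the spirit of the Whyte-type matching arguments cited in the appendix. Your caution that Hall's condition alone fails for infinite bipartite graphs with vertices of infinite degree is well placed, and routing adjacency through entire blocks is exactly the right fix; the paper sidesteps that issue altogether by replacing the matching with Cantor--Schr\"oder--Bernstein.
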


\begin{proof}
    Pick any representative $f\colon G \to H$ of the coarse equivalence. Since the sets $C_i$ form a disjoint cover of $H$, we have $ G = \bigsqcup\nolimits_i f^{-1}(C_i)$. The sets $f^{-1}(C_i)$ are either finite or countable and the sets $C_i$ are infinite. Hence, can choose injective maps $g_i \colon f^{-1}(C_i) \to C_i$. These maps combine to an injective map $g\colon G \to H$. Since $f(x)$ and $g(x)$ belong to the same $C_i$, they are uniformly close (since the $C_i$ are uniformly bounded). Hence, $g$ and $f$ are close maps.

    The same construction yields an injective map $\tilde g \colon H \to G$ that is a representative for the coarse inverse $\crse f^{-1}$. By the Cantor–Schröder–Bernstein Theorem there exists a bijective map $h \colon G \to H$ with either $h(x) = g(x)$ or $h(x) = \tilde g^{-1}(x)$. This map is close to $g$ (and therefore a representative for $\crse f$) since
    \[
      d_H(g(x), h(x)) \leq \sup_{x \in \tilde g(G)} d_H(g(x), \tilde g^{-1}(x)) = \sup_{y \in H} d_H(g\circ \tilde g (y), y),
    \]
    which is bounded since $\tilde g$ is a coarse inverse of $g$.
\end{proof}

To apply the above lemma in the setting of \cref{sec:metric}, it is enough to exhibit an appropriate partition for $\cay\paren{\mathbb Z, S_g}$. This can be done abstractly, by choosing a maximal 3-separated set of vertices $X\subset \mathbb Z$ and partitioning $\mathbb Z$ as $\bigsqcup_{x\in X}B_x$, where each integer $n$ is contained in $B_x$ for the closest $x$ with ambiguities resolved arbitrarily.
Alternatively, an explicit partition can be constructed as
\[
  B_n = (n + S_g) \setminus \bigcup_{i=0}^{n-1} (i + S_g).
\]
These sets are disjoint by definition, and they are infinite because the intersection $(S_g + x) \cap (S_g + y)$ is finite for $x \neq y$ (which is seen by observing that $g^n \pm g^m$ takes any fixed non-zero value at most finitely many times).

\bibliography{BibCoarseGroups}
\end{document}